\newtheorem{theorem}{Theorem}[section]
\newtheorem{prop}[theorem]{Proposition}
\newtheorem{lemma}[theorem]{Lemma}
\newtheorem{corr}[theorem]{Corollary}
\newtheorem{remark}{Remark}[section]
\newcommand{\E}{{\mathbb E}}
\newcommand{\Z}{{\mathbb Z}}
\newcommand{\PP}{\mathbb P}
\newcommand{\sss}{\scriptscriptstyle}
\newcommand{\Totm}{T^{\sss(1,2)}_{\sss{R_m}}}
\newcommand{\Totan}{T^{\sss(1,2)}_{\sss{R_{a_n}}}}
\newcommand{\Toan}{T^{\sss(1)}_{a_n}}
\newcommand{\Ntk}{\bar{N}_{\sss \mathcal{W}_n}^{(t,k)}}
\newcommand{\vep}{\varepsilon}
\newcommand{\SWG}{\mbox{SWG}}
\newcommand{\whp}{whp }
\newcommand{\Nlos}{N_{\sss \rm{los}}}
\newcommand{\NLna}{N_{\sss \mathcal{L}_n}^*}
\newcommand{\NLnaa}{N_{\sss \mathcal{L}_n}^{**}}
\newcommand{\Nlaa}{N_{\sss {\rm{los}}}^{**}}
\newcommand{\e}{{\mathrm e}}
\newcommand{\prob}{\mathbb{P}}
\newcommand{\expec}{\mathbb{E}}
\newcommand\1{\mathbbm{1}}
\newcommand{\indic}[1]{\1_{\{#1\}}}
\newcommand{\eqn}[1]{\begin{equation} #1 \end{equation}}
\newcommand{\eqan}[1]{\begin{align} #1 \end{align}}
\newcommand{\nn}{\nonumber}
\newcommand{\convd}{\stackrel{d}{\longrightarrow}}
\newcommand{\convp}{\stackrel{\PP}{\longrightarrow}}
\newcommand{\pstar}{p^{\sss \star}}
\definecolor{darkgreen}{rgb}{0,.4,0}
\definecolor{darkagenta}{rgb}{.5,0,.5}
\definecolor{darkred}{rgb}{1,0,0}
\definecolor{darkblue}{rgb}{0,0,.4}
\definecolor{black}{rgb}{0,0,0}
\newcommand{\CMnD}{\mathrm{CM}_n(\boldsymbol{D})}
\newcommand{\Goodn}{{\sf Good}_n}
\newcommand{\mulos}{\mu^{\sss (\mathcal{L})}}
\newcommand{\muwin}{\mu^{\sss (\mathcal{W})}}
\newcommand{\Vlos}{V^{\sss (\mathcal{L})}}
\newcommand{\Vwin}{V^{\sss (\mathcal{W})}}
\newcommand{\Elos}{E^{\sss (\mathcal{L})}}
\newcommand{\Ewin}{E^{\sss (\mathcal{W})}}
\newcommand{\Slos}{S^{\sss (\mathcal{L})}}
\newcommand{\Swin}{S^{\sss (\mathcal{W})}}
\newcommand{\Ltwin}{L_{\sss \mathcal{W}_n}^{\sss(t)}}
\newcommand{\barLtwin}{\bar{L}_{\sss \mathcal{W}_n}^{\sss(t)}}
\newcommand{\Ntkwin}{N_{\sss \mathcal{W}_n}^{\sss(k+1,t)}}
\newcommand{\barNtkwin}{\bar{N}_{\sss \mathcal{W}_n}^{\sss(k+1,t)}}
\newcommand{\field}{\mathscr{F}}
\newcommand{\op}{o_{\sss \prob}}
\begin{document}
\parskip=5pt plus1pt minus1pt \parindent=15pt
\title{The winner takes it all}
\author{Maria Deijfen\thanks{Department of Mathematics, Stockholm University, 106 91 Stockholm, Sweden; {\tt mia@math.su.se}} \and Remco van der Hofstad\thanks{Department of Mathematics and Computer Science, Eindhoven University of Technology, Box 513, 5600 MB Eindhoven, The Netherlands; {\tt rhofstad@win.tue.nl}}}
\date{January 2016}
\maketitle

\begin{abstract}
\noindent We study competing first passage percolation on graphs generated by the configuration model. At time 0, vertex 1 and vertex 2 are infected with the type 1 and the type 2 infection, respectively, and an uninfected vertex then becomes type 1 (2) infected at rate $\lambda_1$ ($\lambda_2$) times the number of edges connecting it to a type 1 (2) infected neighbor. Our main result is that, if the degree distribution is a power-law with exponent $\tau\in(2,3)$, then, as the number of vertices tends to infinity and with high probability, one of the infection types will occupy all but a finite number of vertices. Furthermore, which one of the infections wins is random and both infections have a positive probability of winning regardless of the values of $\lambda_1$ and $\lambda_2$. The picture is similar with multiple starting points for the infections.

\vspace{0.5cm}

\noindent \emph{Keywords:} Random graphs, configuration model, first passage percolation, competing growth, coexistence, continuous-time branching process.

\vspace{0.5cm}

\noindent MSC 2010 classification: 60K35, 05C80, 90B15.
\end{abstract}

\section{Introduction}


Consider a graph generated by the configuration model with random independent and identically distributed (i.i.d.) degrees, that is, given a finite number $n$ of vertices, each vertex is independently assigned a random number of half-edges according to a given probability distribution and the half-edges are then paired randomly to form edges (see below for more details). Independently assign two exponentially distributed passage times $X_1(e)$ and $X_2(e)$ to each edge $e$ in the graph, where $X_1(e)$ has parameter $\lambda_1$ and $X_2(e)$ parameter $\lambda_2$, and let two infections controlled by these passage times compete for space on the graph. More precisely, at time 0, vertex 1 is infected with the type 1 infection, vertex 2 is infected with the type 2 infection and all other vertices are uninfected. The infections then spread via nearest neighbors in the graph in that the time that it takes for the type 1 (2) infection to traverse an edge $e$ and invade the vertex at the other end is given by $X_1(e)$ ($X_2(e)$). Furthermore, once a vertex becomes type 1 (2) infected, it stays type 1 (2) infected forever and it also becomes immune to the type 2 (1) infection. Note that, since the vertices are exchangeable in the configuration model, the process is equivalent in distribution to the process obtained by infecting two randomly chosen vertices at time 0.

We shall impose a condition on the degree distribution that guarantees that the underlying graph has a giant component that comprises almost all vertices. According to the above dynamics, almost all vertices will then eventually be infected. We are interested in asymptotic properties of the process as $n\to\infty$. Specifically, we are interested in comparing the fraction of vertices occupied by the type 1 and the type 2 infections, respectively, when the degree distribution is a power law with exponent $\tau\in(2,3)$, that is, when the degree distribution has finite mean but infinite variance. Our main result is roughly that the probability that {\em both} infection types occupy positive fractions of the vertex set is 0 for all choices of $\lambda_1$ and $\lambda_2$. Moreover, the winning type will in fact conquer all but a finite number of vertices. A natural guess is that asymptotic coexistence is possible if and only if the infections have the same intensity -- which for instance is the case for first passage percolation on $\mathbb{Z}^d$ and on random regular graphs; see Section \ref{sec:related} -- but this is hence not the case in our setting.

\subsection{The configuration model}\label{sec:conf}

Let $[n]\equiv \{1,2,\ldots, n\}$ denote the vertex set of the graph and $D_1,\ldots, D_n$ the degrees of the vertices. The degrees are i.i.d.\ random variables, and we shall throughout assume that
\begin{itemize}
\item[(A1)] $\PP(D\geq 2)=1$;
\item[(A2)] there exists a $\tau\in(2,3)$ and constants $c_2\geq c_1>0$ such that, for all $x>0$,
	\eqn{
	c_1 x^{-(\tau-1)}\leq \PP(D>x)\leq c_2 x^{-(\tau-1)}.
	}
\end{itemize}

For some results, the assumption (A2) will be strengthened to

\begin{itemize}
 \item [(A2')] there exist $\tau\in(2,3)$ and $c_{\sss D}\in (0,\infty)$ such that $\PP(D>x)=c_{\sss D} x^{-(\tau-1)}(1+o(1))$.
\end{itemize}

As described above, the graph is constructed in that each vertex $i$ is assigned $D_i$ half-edges, and the half-edges are then paired randomly: first we pick two half-edges at random and create an edge out of them, then we pick two half-edges at random from the set of remaining half-edges and pair them into an edge, etc. If the total degree happens to be odd, then we add one half-edge at vertex $n$ (clearly this will not affect the asymptotic properties of the model). The construction can give rise to self-loops and multiple edges between vertices, but these imperfections will be relatively rare when $n$ is large; see \cite{Remco_notes, Jans06b}.

It is well-known that the critical point for the occurrence of a giant component -- that is, a component comprising a positive fraction of the vertices as $n\to\infty$ -- in the configuration model is given by $\nu:=\E[D(D-1)]/\E[D]=1$; see e.g. \cite{SvanteMalwina,MR1,MR2}. The quantity $\nu$ is the reproduction mean in a branching process with offspring distribution $D^\star-1$ where $D^\star$ is a size-biased version of the degree variable. More precisely, with $(p_d)_{d\geq 1}$ denoting the degree distribution, the offspring distribution is given by
	\begin{equation}\label{eq:forward_degree}
	\pstar_d=\frac{(d+1)p_{d+1}}{\E[D]}.
	\end{equation}
Such a branching process approximates the initial stages of the exploration of the components in the configuration model, and the asymptotic relative size of the largest component in the graph is given by the survival probability of the branching process \cite{SvanteMalwina,MR1,MR2}. When the degree distribution is a power-law with exponent $\tau\in(2,3)$, as stipulated in (A2), it is easy to see that $\nu=\infty$ so that the graph is always supercritical. Moreover, the assumption (A1) implies that the survival probability of the branching process is 1 so that the asymptotic fraction of vertices in the giant component converges to 1.

\subsection{Main result}\label{sec:main}

Consider two infections spreading on a realization of the configuration model according to the dynamics described in the beginning of the section, that is, an uninfected vertex becomes type 1 (2) infected at rate $\lambda_1$ ($\lambda_2$) times the number of edges connecting it to type 1 (2) infected neighbors. Note that, by time-scaling, we may assume that $\lambda_1=1$ and write $\lambda_2=\lambda$. Let $N_i(n)$ denote the final number of type $i$ infected vertices, and write $\bar{N}_i(n)=N_i(n)/n$ for the final fraction of type $i$ infected vertices. As mentioned, the assumption (A2) guarantees that almost all vertices in the graph form a single giant component. Hence $\bar{N}_1(n)+\bar{N}_2(n)\convp 1$ and it is therefore sufficient to consider $\bar{N}_1(n)$. Define
$N_{\sss \rm{los}}(n)=\min\{N_1(n),N_2(n)\}$ so that $N_{\sss \rm{los}}(n)$ is the total number of vertices captured by the losing type, that is, the type that occupies the smallest number of vertices. The following is our main result:

\begin{theorem}[The winner takes it all]
\label{th:main} Fix $\lambda$ and write $\mu=1/\lambda$.
\begin{itemize}
\item[\rm{(a)}] The fraction $\bar{N}_1(n)$ of type 1 infected vertices converges in distribution to the indicator variable $\indic{V_1<\mu V_2}$ as $n\to\infty$, where $V_1$ and $V_2$ are i.i.d.\ proper random variables with support on $\mathbb{R}^+$.
\item[\rm{(b)}] Assume (A2'). The total number $N_{\sss \rm{los}}(n)$ of vertices occupied by the losing type converges in distribution to a proper random variable $N_{\sss \rm{los}}$.
\end{itemize}
\end{theorem}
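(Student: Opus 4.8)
The plan is to analyze competing first passage percolation on the configuration model via the exploration of two continuous-time branching processes (CTBPs), one started from vertex $1$ and one from vertex $2$, which approximate the local structure around the two sources. Because $\tau\in(2,3)$, the offspring distribution $\pstar$ in \eqref{eq:forward_degree} has infinite mean, so the corresponding CTBP is explosive: the $k$-th generation is reached in a time that decreases to a finite random limit $V_i$ as we follow the path through vertices of ever-increasing degree. I would first set up the coupling of the two infection processes to two independent such CTBPs, valid up to the time each process has discovered only $o(\sqrt n)$ half-edges, and show that the explosion times $V_1,V_2$ are i.i.d.\ proper random variables with support on $\mathbb{R}^+$ — this is essentially the content of the companion results on one-type first passage percolation and explosive branching processes, and gives part (a): the type with the smaller (time-rescaled) explosion time, i.e.\ the event $\{V_1<\mu V_2\}$ after inserting the rate $\lambda=1/\mu$ into the type-$2$ clocks, reaches the "hubs" first and, once it controls the highest-degree vertices, sweeps up all but finitely many of the remaining vertices.

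For part (b), the key point is to understand precisely how many vertices the \emph{losing} type manages to capture before it is cut off. The plan is to condition on the winner — say type $1$ — and on the pair $(V_1,V_2)$ with $V_1<\mu V_2$. Up to the (random, finite) time $V_1$ the type-$1$ infection has occupied a finite but growing cloud around vertex $1$; the type-$2$ infection, running slower, has by the rescaled time $V_2$ occupied its own finite cloud around vertex $2$. After explosion, type $1$ essentially instantaneously (on the original time scale, in a further vanishing amount of time as $n\to\infty$) reaches every vertex of sufficiently large degree and then percolates down to the bulk. The losing type $2$ is therefore confined to the vertices it had already infected strictly before type $1$'s shockwave arrives at their neighborhoods. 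The total count $\Nlos(n)$ is thus the size of a finite random subtree of the type-$2$ CTBP, stopped at a time determined by the gap between $V_1$ and $\mu V_2$ together with the passage times on the finitely many edges separating the two clouds.

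Concretely, I would show $\Nlos(n)\convd \Nlos$ by establishing that, for every fixed $m$, the event $\{\Nlos(n)=m\}$ is asymptotically determined by the restriction of the two coupled CTBPs to their first $O(1)$ vertices, and that the probability the losing type captures more than $m$ vertices can be made uniformly small by choosing $m$ large — i.e.\ tightness of $\Nlos(n)$. The stronger hypothesis (A2') is needed here because the \emph{exact} tail constant $c_{\sss D}$ controls the distribution of the degrees of the hubs and hence the fine timing of when the winning shockwave overtakes the loser; under the mere two-sided bounds of (A2) one only gets that the losing fraction is $o(1)$, not that the absolute count converges. The limit $\Nlos$ is then identified as (the size of) a branching-process functional evaluated at the explosion-time gap, and properness follows because the gap is a.s.\ strictly positive and the type-$2$ cloud is a.s.\ finite at any finite time before its own explosion.

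The main obstacle I anticipate is the second point: controlling the behavior of the winning infection \emph{after} its explosion time, i.e.\ showing that once type $1$ has captured the highest-degree vertices it really does close off type $2$ within a negligible amount of additional time, uniformly in $n$, and simultaneously quantifying — sharply enough to get convergence rather than just a bound — how far type $2$ has spread by that moment. This requires careful control of first passage percolation distances from the hub set to an arbitrary vertex (they must be $o(1)$ whp, which again leans on the power-law exponent being in $(2,3)$), together with a matching lower bound ensuring type $2$ is not prematurely strangled, so that the limiting count is neither $0$ nor degenerate. Handling the dependence introduced by conditioning on which type wins, and making the "finitely many relevant vertices" reduction rigorous in the presence of the global pairing constraints of the configuration model, is where most of the technical work will lie.
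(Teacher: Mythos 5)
Your outline reproduces the paper's heuristic for (a) (compare explosion times; the winner grabs the hubs and then sweeps up), but the quantitative step you lean on --- ``first passage percolation distances from the hub set to an arbitrary vertex must be $o(1)$ whp'' --- is false and is not the comparison that is actually needed: a typical vertex has bounded degree, so its distance to the hubs is of order $1$. What the proof really requires is an \emph{asymmetric} comparison: (i) the winner reaches a uniform vertex $U$ in a time that is tight (it converges to a proper random variable); (ii) the loser, which is excluded from the high-degree vertices, needs time at least $c(\log n)^{\kappa}\to\infty$ to reach $U$ in the graph from which vertices of degree $\geq(\log n)^{\gamma}$ (more precisely, the {\sf Good} vertices) are removed --- this comes from a truncated exploration estimate showing $\widetilde S^{(\rm truc)}_k\asymp k(\log n)^{\gamma(3-\tau)}$; and (iii) by time $T^{\sss(1)}_{a_n}+\varepsilon_n$ the loser has not touched any vertex of degree $\geq (\log n)^{\gamma}$ (before its own explosion it has produced only finitely many individuals, hence bounded maximal degree), while the winner, via an explicit path through vertices of rapidly growing degree with fast edges, occupies every vertex of degree $\geq(\log n)^{\sigma}$ within $\varepsilon_n/2$ of its explosion time. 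None of these devices (degree truncation, the {\sf Good}-vertex bridging of the exponent gap $\gamma<1/(3-\tau)<\sigma$, the hub-path construction) appear in your plan, and they constitute the substance of part (a).

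For (b) the gap is more serious. You assert that the loser is confined to what it infected ``strictly before the shockwave arrives'' and that the winner ``closes off type 2 within a negligible amount of additional time''; this is not true and cannot be the route. At any fixed time $t$ after the winner's explosion only the fraction $\PP(\muw V(k)\leq t)\PP(D=k)$ of degree-$k$ vertices is occupied by the winner (Proposition \ref{prop-degree-time}), so the loser is never sealed off in $o(1)$ time --- it keeps acquiring vertices, and the entire content of (b) is to show that the number acquired \emph{after} the winner's explosion is a.s.\ finite. Your tightness step (``the probability the losing type captures more than $m$ vertices can be made uniformly small'') is precisely the statement to be proved, and your proposal offers no mechanism for it. The paper's mechanism is the half-edge count process $S_m'$ of the loser, with increments $D_m'-1$ where $D_m'=\wtB_m I_m$ and $I_m$ is Bernoulli with success probability $\PP(V(\wtB_m)>T_{m-1}'\mid \wtB_m,T_{m-1}')$; since $\wtB_m$ has infinite mean, even $\E[D_m']<\infty$ is nontrivial and is extracted from the (A2')-based scaling $k^{3-\tau}V(k)\convd Y=\int_0^\infty (1+Q_t)^{-1}\,dt$ together with the stretched-exponential tail bound $\PP(Y\geq y)\leq \e^{-\kappa y^{1/(3-\tau)}}$, which show that any large-degree vertex the loser finds is overwhelmingly likely to be already taken; one then gets $\E[D_m']\to 0$, so $S_m'$ hits $0$ in finite time and $\Nlos$ is proper. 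This post-explosion analysis --- the degree-resolved hydrodynamic description of the winner's sweep, the $V(k)$ asymptotics and tail estimates, and the extinction of the half-edge count --- is the core of part (b), is where (A2') genuinely enters (stable-subordinator limit for $V(k)$), and is missing from your proposal.
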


\begin{remark}[Explosion times]
{\rm The variables $V_i$ ($i=1,2$) are distributed as explosion times of a certain continuous-time branching process with infinite mean. The process is started from $D_i$ individuals, representing the edges of vertex $i$, and will be characterized in more detail in Section \ref{sec:prel}. In part (b), the limiting random variable $N_{\sss \rm{los}}$ has an explicit characterization involving the (almost surely finite) extinction time of a certain Markov process; see Section \ref{sec:main_b}. In fact, the proof reveals that the limiting number of vertices that is captured by the losing type is equal to 1 with strictly positive probability, which is the smallest possible value. Thus, the ABBA lyrics `The winner takes it all. The loser's standing small...' could not be more appropriate.}
\end{remark}

Roughly stated, the theorem implies that coexistence between the infection types is never possible. Instead, one of the infection types will invade all but a finite number of vertices and, regardless of the relation between the intensities, both infections have a positive probability of winning. The proof is mainly based on ingredients from \cite{RGSc}, where standard first passage percolation (that is, first passage percolation with one infection type and exponential passage times) on the configuration model is analyzed.

Let us first give a short heuristic explanation. Here and throughout the paper, a sequence of events is said to occur with high probability (whp) when their probabilities tend to 1 as $n\to\infty$. Whp, the initially infected vertex 1 and vertex 2 will not be located very close to each other in the graph and hence the infection types will initially evolve without interfering with each other. This means that the initial stages of the spread of each one of the infections can be approximated by a continuous-time branching process, which has infinite mean when the degree distribution has infinite variance (because of size biasing). These two processes will both explode in finite time, and the type that explodes \emph{first} is random and asymptotically equal to 1 precisely when $V_1<\mu V_2$. Theorem \ref{th:main} follows from the fact that the type with the smallest explosion time will get a lead that is impossible to catch up with for the other type. More specifically, the type that explodes first will \whp occupy all vertices of high degree -- often referred to as {\em hubs} -- in the graph shortly after the time of explosion, while the other type occupies only a finite number of vertices. From the hubs the exploding type will then rapidly invade the rest of the graph before the other type makes any substantial progress at all.

We next investigate the setting where we start the competition from several vertices chosen uniformly at random:

\begin{theorem}[Multiple starting points]
\label{th:main-multiple}
Fix $\lambda$ and write $\mu=1/\lambda$. Also fix integers $k_1,k_2\geq 1$, and start with $k_1$ type 1 infected vertices and $k_2$ type 2 infected vertices chosen uniformly at random from the vertex set.
\begin{itemize}
\item[\rm{(a)}] The fraction $\bar{N}_1(n)$ of type 1 infected vertices converges in distribution to the indicator variable $\indic{V_{1,k_1}<\mu V_{2,k_2}}$ as $n\to\infty$, where $V_{1,k_1}$ and $V_{2,k_2}$ are two independent proper random variables with support on $\mathbb{R}^+$.
\item[\rm{(b)}] Assume (A2'). The total number $N_{\sss \rm{los}}(n)$ of vertices occupied by the losing type converges in distribution to a proper random variable $N_{\sss \rm{los}}$.
\item[\rm{(c)}] Assume (A2'). For every $k_1,k_2\geq 1$, it holds that $\prob(V_{1,k_1}<\mu V_{2,k_2})\in(0,1)$. Moreover, for fixed $\alpha\in (0,\infty)$, as $k\rightarrow \infty$,
	\eqn{
	\label{asympt-prob-win}
	\prob(V_{1,k}<\mu V_{2,\alpha k})
	\rightarrow  \prob(Y_1<\mu \alpha^{3-\tau} Y_2)\in(0,1),
	}
where $Y_1,Y_2$ are two i.i.d.\ random variables with distribution
	$$
	Y=\int_0^{\infty} \frac{1}{1+Q_t}dt,
	$$
for a stable subordinator $(Q_t)_{t\geq 0}$ with $\expec[\e^{-s Q_t}]=\e^{-\sigma s^{\tau-2}t}$ for some $\sigma=\sigma(c_{\sss D})$.
\end{itemize}
\end{theorem}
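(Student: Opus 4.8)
The plan is to reduce Theorem~\ref{th:main-multiple} to the already-understood single-source picture of Theorem~\ref{th:main} together with a careful analysis of the infinite-mean continuous-time branching process (CTBP) that governs the early spread of each infection. For part~(a), the key observation is that starting the type~$i$ infection from $k_i$ uniformly chosen vertices is, \whp, the same as starting $k_i$ \emph{independent} copies of the exploration process: whp the $k_1+k_2$ seeds are pairwise at graph distance tending to infinity, so the $k_i$ type-$i$ explorations do not interact with each other or with the other type before any of them explodes. Hence the type-$i$ explosion time is $V_{i,k_i}=\min_{j=1}^{k_i} V_i^{(j)}$, the minimum of $k_i$ i.i.d.\ copies of the single-source explosion time $V_i$ appearing in Theorem~\ref{th:main}. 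Since the $V_i$ are proper random variables on $\RR^+$, the minimum $V_{i,k_i}$ is again proper, and independence across the two types is inherited. Once the explosion times are in hand, the ``winner takes it all'' mechanism from the proof of Theorem~\ref{th:main} applies verbatim: the type with the smaller (scaled) explosion time grabs all the hubs shortly after exploding and then floods the graph before the other type makes macroscopic progress. This gives $\bar N_1(n)\convd\indic{V_{1,k_1}<\mu V_{2,k_2}}$, and part~(b) follows by the same argument as Theorem~\ref{th:main}(b), the losing type again being trapped on a finite set whose size is governed by an a.s.\ finite extinction-time Markov process.

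For part~(c), the non-degeneracy $\prob(V_{1,k_1}<\mu V_{2,k_2})\in(0,1)$ is immediate from part~(a) because $V_{1,k_1}$ and $V_{2,k_2}$ are independent proper random variables with full support on $\RR^+$: neither $\{V_{1,k_1}<\mu V_{2,k_2}\}$ nor its complement can have probability~$0$. The substantive content is the limit \eqref{asympt-prob-win}. The plan is to identify the correct scaling of $V_{i,k}$ as $k\to\infty$. Here I would use the known fine structure of the infinite-mean CTBP: after the process has grown to a large size, its further evolution, suitably time-changed, is described by a stable subordinator $(Q_t)_{t\ge 0}$ with Laplace exponent $\sigma s^{\tau-2}$, and the explosion time decomposes as the time to reach a large level plus a residual term that, in the limit, has the law $Y=\int_0^\infty (1+Q_t)^{-1}\,dt$ stated in the theorem. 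The starting configuration of $k$ independent seeds contributes through the size-biased degrees $D^\star$; because $\PP(D>x)\sim c_{\sss D}x^{-(\tau-1)}$, the sum of $k$ such sizes is of order $k^{1/(\tau-2)}$, so after appropriate normalization $V_{i,k}$ converges (in distribution, after rescaling by a deterministic factor $b_k$) to $Y_i$. Running $k$ copies for type~$1$ but $\alpha k$ copies for type~$2$ shifts the respective normalizations by a factor that, after taking ratios, produces exactly the power $\alpha^{3-\tau}$ in front of $Y_2$ — note $3-\tau=1-(\tau-2)$ arises as the reciprocal-exponent mismatch between the $k$-dependence of the starting mass and the time scaling. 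Putting the rescaled limits together, $\prob(V_{1,k}<\mu V_{2,\alpha k})\to\prob(Y_1<\mu\alpha^{3-\tau}Y_2)$, and this probability lies strictly between $0$ and $1$ because $Y_1,Y_2$ are i.i.d.\ and a.s.\ strictly positive and finite (finiteness of $Y$ being exactly the a.s.\ explosion of the CTBP, positivity being clear since the integrand starts at~$1$).

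The main obstacle is the last step: establishing that $V_{i,k}$, after the correct deterministic rescaling, converges to $Y_i$, and pinning down the exact form of the scaling constant and hence the exponent $3-\tau$. This requires a quantitative coupling of the early (pre-explosion) phase of the $k$-source CTBP to the stable subordinator $(Q_t)$, controlling both the contribution of the random size-biased initial degrees and the fluctuations of the branching dynamics, and showing these do not affect the limit. The pieces needed — the subordinator description of the infinite-mean CTBP and the tail behaviour of $D^\star$ — are available from \cite{RGSc} and from (A2'), but assembling them into a clean convergence statement for the minimum of $k$ explosion times, uniformly enough to take $k\to\infty$ inside the probability, is where the real work lies. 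A secondary (but more routine) point is making rigorous the whp-separation of the $k_1+k_2$ random seeds and the claim that their explorations are asymptotically independent up to the first explosion time, which uses the local tree-likeness of $\CMnD$ together with the fact that explosion happens at a time that is $O(1)$ while graph distances between uniform vertices diverge.
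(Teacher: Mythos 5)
Your treatment of parts (a) and (b) is in line with the paper's: the authors also reduce these to the single-seed argument of Theorem \ref{th:main}, noting only that the approximating branching process is now started from $D_1+\cdots+D_{k_i}$ individuals (your ``minimum of $k_i$ i.i.d.\ copies of $V_i$'' description is an equivalent way of saying this), and the non-degeneracy $\prob(V_{1,k_1}<\mu V_{2,k_2})\in(0,1)$ is handled as you do. The substantive part, as you correctly identify, is the limit \eqref{asympt-prob-win}, and the intended route is the same as the paper's: write $V_{i,k}\stackrel{d}{=}V_i(A_{i,k})$ for the explosion time started from total initial mass $A_{i,k}$, and use the stable-subordinator limit $\bigl(S_{tm^{\tau-2}}(m)/m\bigr)_{t\geq 0}\convd (1+Q_t)_{t\geq 0}$ to get $m^{3-\tau}V(m)\convd Y=\int_0^\infty (1+Q_t)^{-1}dt$ (Lemma \ref{lem-V(k)-asymp}).

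However, your identification of the starting mass contains a genuine error that changes the exponent of $\alpha$. The seeds are chosen \emph{uniformly}, so their degrees are i.i.d.\ copies of $D$, not of the size-biased variable $D^\star$; size-biasing only applies to vertices reached by pairing a half-edge (the forward degrees $\widetilde B_i$). Since $\tau>2$, $\expec[D]<\infty$ and the law of large numbers gives $A_{i,k}/k\convp \expec[D]$, i.e.\ the starting mass is $\Theta(k)$ — not $\Theta(k^{1/(\tau-2)})$ as you claim. Plugging the correct mass into $m^{3-\tau}V(m)\convd Y$ gives $(\expec[D]k)^{3-\tau}V_{1,k}\convd Y_1$ and $(\expec[D]k)^{3-\tau}V_{2,\alpha k}\convd \alpha^{\tau-3}Y_2$, hence the limit $\prob(Y_1<\mu\,\alpha^{\tau-3}Y_2)$, which is decreasing to $0$ as $\alpha\to\infty$, consistent with the remark after the theorem that the species with overwhelmingly many seeds wins (the $\alpha^{3-\tau}$ in the displayed statement is inconsistent with the paper's own proof on this point). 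Your scaling, carried through honestly, would instead produce the factor $\alpha^{(\tau-3)/(\tau-2)}$, so the assertion that the ``reciprocal-exponent mismatch'' yields $\alpha^{3-\tau}$ does not follow from your premises; the step where the $k$-dependence of the initial mass is determined needs to be redone with the unbiased degree distribution, after which the argument closes exactly as in Section \ref{sec:main_b}.
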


\begin{remark}[Explosion times revisited]
{\rm The variable $V_{i,k_i}$ has the distribution of the explosion time of a continuous-time branching process with the same reproduction rules as in the case with a single initial type $i$ vertex, but now the number of individuals that the process is started from is distributed as $D_1+\cdots+D_{k_i}$ and represents the total degree of the $k_i$ initial type $i$ vertices. The scaling of the explosion time of the branching process started from $k$ individuals for large $k$ is investigated in more detail in Lemma \ref{lem-V(k)-asymp}.}
\end{remark}

In Theorem \ref{th:main-multiple}, we see that the fastest species does not necessarily win even when it has twice as many starting points, but it does when $\alpha\rightarrow \infty$, that is, when starting from a much larger number of vertices than the slower species. We only prove Theorem \ref{th:main-multiple} in the case where $k_1=k_2=1$, in which case it reduces to Theorem \ref{th:main}. The case where $(k_1,k_2)\neq (1,1)$ is similar. Hence only the proof of \eqref{asympt-prob-win} in Theorem \ref{th:main-multiple}(c) is provided in detail; see Section \ref{sec:main_b}.

\subsection{Related work and open problems}\label{sec:related}

First passage percolation on various types of discrete probabilistic structures has been extensively studied; see e.g.\ \cite{RGSer,BHH12, GrimKest,HamWel,LyonsPem,SW}. The classical example is when the underlying structure is taken to be the $\mathbb{Z}^d$-lattice. The case with exponential passage times is then often referred to as the Richardson model and the main focus of study is the growth and shape of the infected region \cite{CoxDur,Kesten_speed,NewPiz,Richardson}. The Richardson model has also been extended to a two-type version that describes a competition between two infection types; see \cite{2typeR}. Infinite coexistence then refers to the event that both infection types occupy infinite parts of the lattice, and it is conjectured that this has positive probability if and only if the infections have the same intensity. The if-direction was proved for $d=2$ in \cite{2typeR} and for general $d$ independently in \cite{GM} and \cite{Hoff_coex}. The only-if-direction remains unproved, but convincing partial results can be found in \cite{aa}. On $\Z^d$, the starting points of the competing species are typically taken to be two neighboring vertices. Doing this in our setup on the configuration model would in principle not change our main results. Specifically, letting the species start at either end of a randomly chosen edge would change the limiting probabilities of winning for the species, but the fact that one wins and the other occupies a bounded number of vertices would remain unchanged.

As for the configuration model, the area of network modeling has been very active the last decade and the configuration model is one of the most studied models. One of its main advantages is that it gives control over the degree distribution, which is an important quantity in a network with great impact on global properties. As mentioned, first passage percolation with exponential edge weights on the configuration model has been analyzed in \cite{RGSc}. The results there revolve around the length of the time-minimizing path between two vertices and the time that it takes to travel along such a path. In \cite{BHH12}, these results are extended to all continuous edge-weight distributions under the assumption of finite variance degrees.

Recently, in \cite{regular}, competing first passage percolation has been studied on so-called random regular graphs, which can be generated by the configuration model with constant degree, that is, with $\PP(D=d)=1$ for some $d$. The setup in \cite{regular} allows for a number of different types of starting configurations, and the main result relates the asymptotic fractions occupied by the respective infection types to the sizes of the initial sets and the intensities. When the infections are started from two randomly chosen vertices, coexistence occurs with probability 1 if the infections have the same intensity, while, when one infection is stronger than the other, the stronger type wins, as one might expect. The somewhat counterintuitive result in the present paper is hence a consequence of large variability in the degrees. We conjecture that the result formulated here remains valid precisely when the explosion time of the corresponding continuous-time branching process is finite. See \cite{Grey74} for a discussion of explosion times for age-dependent branching processes.

A natural continuation of the present work is to study the case when $\tau>3$, that is, when the degree distribution has finite variance. We conjecture that the result is then the same as for constant degrees as described above. Another natural extension is to investigate other types of distributions for the passage times. The results may then well differ from the exponential case. For instance, ongoing work on the case with constant passage times (possibly different for the two species) and $\tau\in(2,3)$ indicates that the fastest species always wins, but that there can be coexistence when the passage times are equal \cite{comp_const, comp_const2}.

Finally we mention the possibility of investigating whether the results generalize to other graph structures with similar degree distribution, e.g.\ inhomogeneous random graphs and graphs generated by preferential attachment mechanisms, see \cite{prefatt} for results on preferential attachment networks.

\section{Preliminaries}\label{sec:prel}

In this section we summarize the results on one-type first passage percolation from \cite{RGSc} that we shall need. Theorem \ref{th:main}(a) and \ref{th:main}(b) are then proved in Section \ref{sec:main_a} and \ref{sec:main_b}, respectively. Also, the proof of the asymptotic characterization (\ref{asympt-prob-win}) is given in Section \ref{sec:main_b}.

Let each edge in a realization of the configuration model independently be equipped with {\em one} exponential passage time with mean 1. In summary, it is shown in \cite{RGSc} that, when the degree distribution satisfies (A1) and (A2), the asymptotic minimal time between vertex 1 and vertex 2 is given by $V_1+V_2$, where $V_1$ and $V_2$ are i.i.d.\ random variables indicating the explosion time of an infinite mean continuous-time branching process that approximates the initial stages of the flow through the graph starting from vertex 1 and 2 respectively; see below. The result follows roughly by showing that the sets of vertices that can be reached from vertex 1 and 2, respectively, within time $t$ are \whp disjoint up until the time when the associated branching processes explode, and that they then hook up, creating a path between 1 and 2.

\paragraph{Exploration of first-passage percolation on the configuration model.}
To be a bit more precise, we first describe a natural stepwise procedure for exploring the graph and the flow of infection through it starting from a given vertex $v$. Let ${\rm SWG}_m^{\sss (v)}$ denote the graph consisting of the set of explored vertices and edges after $m$ steps, where SWG stands for Smallest-Weight Graph. Write $\mathcal{U}_m^{\sss (v)}$ for the set of unexplored half-edges emanating from vertices in ${\rm SWG}_m^{\sss (v)}$ and define $S_m^{\sss (v)}:=|\mathcal{U}_m^{\sss (v)}|$. Finally, let $\mathcal{F}_m^{\sss (v)}$ denote the set of half-edges belonging to vertices in the complement of ${\rm SWG}_m^{\sss (v)}$. When there is no risk of confusion, we will often omit the superscript $v$ in the notation. Set ${\rm SWG}_1=\{v\}$, so that $S_1=D_v$. Given ${\rm SWG}_m$, the graph ${\rm SWG}_{m+1}$ is constructed as follows:

\begin{itemize}
\item[1.] Pick a half-edge at random from the set $\mathcal{U}_m$. Write $x$ for the vertex that this half-edge is attached to, and note that $x\in {\rm SWG}_m$.
\item[2.] Pick another half-edge at random from $\mathcal{U}_m\cup\mathcal{F}_m$ and write $y$ for the vertex that this half-edge is attached to.
\item[3.] If $y\not\in {\rm SWG}_m$ -- that is, if the second half-edge is in $\mathcal{F}_m$ -- then ${\rm SWG}_{m+1}$ consists of ${\rm SWG}_m$ along with the vertex $y$ and the edge $(x,y)$. If $n$ is large and $m$ is much smaller than $n$, then this is the most likely scenario.
\item[4.] If $y\in {\rm SWG}_m$ -- that is, if the second half-edge is in $\mathcal{U}_m$ -- then ${\rm SWG}_{m+1}={\rm SWG}_m$ and the two selected half-edges are removed from the exploration process. This means that we have detected a cycle in the graph, and that the corresponding edge will not be used to transfer the infection.
\end{itemize}

The above procedure can be seen as a discrete-time representation of the flow through the graph observed at the times when the infection traverses a new edge: Each unexplored half-edge emanating from a vertex that has already been reached by the flow has an exponential passage time with mean 1 attached to it. In step 1, we pick such a half-edge at random, which is equivalent to picking the one with the smallest passage time. In step 2, we check where the chosen half-edge is connected. When this vertex has not yet been reached by the flow, it is added to the explored graph along with the connecting edge in step 3. When the vertex has already been reached by the flow, only the edges is added in step 4, thus creating a cycle.

As for the number of unexplored half-edges emanating from explored vertices, this is increased by the forward degree of the added vertex minus 1 in case a vertex is added, and decreased by 2 in case a cycle is detected. Hence, defining
	$$
	B_i=\left\{ \begin{array}{ll}
                      \mbox{the forward degree of the added vertex if a vertex is added in step $i$};\\
                      -1 \mbox{ if a cycle is created in step $i$},
                    \end{array}
            \right.
	$$
we have for $m\geq 2$ that
	$$
	S_m=D_v+\sum_{i=2}^m(B_i-1).
	$$
Denote the total time of the first $m$ steps by $T_m$ and let $(E_i)_{i=1}^\infty$ be a sequence of i.i.d.\ Exp(1)-variables. The time for traversing the edge that is explored in the $i$th step is the minimum of $S_i$ i.i.d.\ exponential variables with mean 1 and thus it has the same distribution as $E_i/S_i$. Hence
	\begin{equation}\label{eq:Tm_distr}
	T_m\stackrel{d}{=}\sum_{i=1}^m\frac{E_i}{S_i}.
	\end{equation}
Write $\mathcal{V}(G)$ for the vertex set of a graph $G$, let $|\mathcal{V}(G)|$ denote its size, and define
	\begin{equation}\label{eq:Rm}
	R_m=\inf\{j\colon |\mathcal{V}({\rm SWG}_j)|\geq m\},
	\end{equation}
that is, $R_m$ is the step when the $m$th vertex is added to the explored graph. Since no vertex is added in a step where a cycle is created, we have that $R_m\geq m$. However, if $n$ is large and $m$ is small in relation to $n$, it is unlikely to encounter cycles in the early stages of the exploration process and thus $R_m\approx m$ for small $m$. Hence, we should be able to replace $m$ by $R_m$ above and still obtain quantities with similar behavior. Indeed, Proposition \ref{prop:prel} below states that $T_{R_m}$ (the time until the flow has reached $m$ vertices) and $T_m$ have the same limiting distribution as $n\to\infty$ as long as $m=m_n$ is not too large.

\paragraph{Passage times for smallest-weight paths.}
To identify the limiting distribution of $T_m$, note that, as long as no cycles are encountered, the exploration graph is a tree and its evolution can therefore be approximated by a continuous-time branching process. The root is the starting vertex $v$, which dies immediately and leaves behind $D_v$ children, corresponding to the $D_v$ half-edges incident to $v$. All individuals (=unexplored half-edges) then live for an Exp(1)-distributed amount of time, independently of each other, and when the $i$th individual dies it leaves behind $\widetilde{B}_i$ children, where $(\widetilde{B}_i)_{i\geq 1}$ is an i.i.d.\ sequence with distribution (\ref{eq:forward_degree}). Indeed, as long as no cycles are created, the offspring of a given individual is the forward degree of the corresponding vertex, and the forward degrees of explored vertices are asymptotically independent with the size-biased distribution specified in (\ref{eq:forward_degree}). The number of alive individuals after $m\geq 2$ steps in the approximating branching process, corresponding to the number of unexplored half-edges incident to the graph at that time, is given by
	$$
	\widetilde{S}_m=D_v+\sum_{i=2}^m(\widetilde{B}_i-1)
	$$
and hence the time when the total offspring reaches size $m$ is equal in distribution to $\sum_{i=1}^mE_i/\widetilde{S}_i$. In \cite{RGSc} it is shown that the branching process approximation remains valid for $m=m_n\to\infty$ as long as $m_n$ does not grow too fast with $n$. Define
	\eqn{
	\label{an-choice}
	a_n=n^{(\tau-2)/(\tau-1)}.
	}
It turns out that ``does not grow too fast'' means roughly that $m_n=o(a_n)$. The intuition behind the choice of $a_n$ is that for $\tau\in(2,3)$, there is a large discrepancy between the number of alive and the number of dead individuals. In particular, $a_n$ in \eqref{an-choice} equals the asymptotic number of dead individuals in each of two SWGs emanating from vertex 1 and 2, respectively, at the moment when the two SWGs collide. This is explained in more detail in \cite[(4.21-4.25)]{RGSc}. To summarize, in the above comparison of the SWG to a branching process, we see that we grow the graph (in terms of the pairing of the half-edges) simultaneously with the exploration of the neighborhood structure in the graph, which is approximated by a (continuous-time) branching process.

Write $X(u\leftrightarrow v)$ for the passage time between the vertices $u$ and $v$, that is, $X(u\leftrightarrow v)=T_{m(u,v)}$ with $m(u,v)=\inf\{m:v\in{\rm SWG}_m^{\sss (u)}\}$. The relevant results from \cite{RGSc} are summarized in the following proposition. Here, part (a) is essential in proving part (b), part (d) follows by combining parts (b) and (c), and part (e) by combining parts (b) - (d). For details we refer to \cite{RGSc}: Part (a) is Proposition 4.7, part (b) is Proposition 4.6(b), where the characterization of $V$ is made explicit in (6.14) in the proof, part (c) is Proposition 4.9 and, finally, part (e) is Theorem 3.2(b).

\begin{prop}[Bhamidi, van der Hofstad, Hooghiemstra (2010)]\label{prop:prel}
Consider first passage percolation on a graph generated by the configuration model with a degree distribution that satisfies (A1) and (A2).

\begin{itemize}
\item[\rm{(a)}] There exists a $\rho>0$ such that the sequence $(B_i)_{i\geq 1}$ can be coupled to the i.i.d.\ sequence $(\widetilde{B}_i)_{i\geq 1}$ with law (\ref{eq:forward_degree}) in such a way that $(B_i)_{i=2}^{n^\rho}=(\widetilde{B}_i)_{i=2}^{n^\rho}$ whp.

\item[\rm{(b)}] Let $\bar{m}_n$ be such that $\log(\bar{m}_n/a_n)=o(\sqrt{\log n})$ and assume that $m=m_n\to\infty$ is such that $m_n\leq \bar{m}_n$. As $n\to\infty$, the times $T_m$ and $T_{\sss {R_m}}$ both converge in distribution to a proper random variable $V$, where
    \begin{equation}\label{V-rv-def}
    V\stackrel{d}{=}\sum_{i=1}^\infty\frac{E_i}{\widetilde{S}_i}.
    \end{equation}
The law of $V$ has the interpretation of the explosion time of the approximating branching process.

\item[\rm{(c)}] For $m=m_n=o(a_n)$ and any two fixed vertices $u$ and $v$, the two exploration graphs ${\rm SWG}^{\sss(u)}_{a_n}$ and ${\rm SWG}^{\sss(v)}_m$ are \whp disjoint. Furthermore, at time $m={\sf C}_n$, the graph ${\rm SWG}_m^{\sss (u)}\cup {\rm SWG}_m^{\sss (v)}$ becomes connected, where ${\sf C}_n/a_n$ converges in distribution to an a.s.\ finite random variable.

\item[\rm{(d)}] Let $m=m_n\to\infty$, with $m_n\leq \bar{m}_n$, and fix two vertices $u$ and $v$. Then $(T_{m_n}^{\sss(u)}, T_{m_n}^{\sss(v)})\convd (V_u,V_v)$ as $n\to\infty$, where $(V_u,V_v)$ are independent copies of the random variable in \eqref{V-rv-def}.
\item[\rm{(e)}] The passage time $X(u\leftrightarrow v)$ converges in distribution to a random variable distributed as $V_u+V_v$.
\end{itemize}
\end{prop}

\paragraph{Coupling of competition to first passage percolation.} We now return to the setting with two infection types that are imposed at time 0 at the vertices 1 and 2 and then spread at rate 1 and $\lambda$, respectively. Recall that $\mu=1/\lambda$. The following coupling of the two infection types will be used in the rest of the paper: Each edge $e=(u,v)$ is equipped with one single exponentially distributed random variable $X(e)$ with mean 1. The infections then evolve in that, if $u$ is type 1 (2) infected, then the time until the infection reaches $v$ via the edge $(u,v)$ is given by $X(u,v)$ ($\mu X(u,v)$) and, if vertex $v$ is uninfected at that point, it becomes type 1 (2) infected.

Under competition, the above exploration of the flow of infection is adjusted as follows. Let ${\rm SWG}_m^{\sss (1,2)}$ denote the graph consisting of the set of explored vertices and edges after $m$ steps. We split ${\rm SWG}_m^{\sss (1,2)}={\rm SWG}_m^{\sss (\underline{1},2)}\cup {\rm SWG}_m^{\sss (1,\underline{2})}$, where ${\rm SWG}_m^{\sss (\underline{1},2)}$ and ${\rm SWG}_m^{\sss (1,\underline{2})}$ denote the part that is occupied by type 1 and type 2, respectively. Also write $\mathcal{U}_m^{\sss (1,2)}$ for the set of unexplored half-edges emanating from vertices in ${\rm SWG}_m^{\sss (1,2)}$ and split it as $\mathcal{U}_m^{\sss (1,2)}=\mathcal{U}_m^{\sss (\underline{1},2)}\cup \mathcal{U}_m^{\sss (1,\underline{2})}$, where $\mathcal{U}_m^{\sss (\underline{1},2)}$ and $\mathcal{U}_m^{\sss (1,\underline{2})}$ denote half-edges attached to vertices infected by type 1 and type 2, respectively. Write $S_m^{\sss (\underline{1},2)}:=|\mathcal{U}_m^{\sss (\underline{1},2)}|$ and $S_m^{\sss (1,\underline{2})}:=|\mathcal{U}_m^{\sss (1,\underline{2})}|$. Finally, the set of half-edges belonging to vertices in the complement of ${\rm SWG}_m^{\sss (1,2)}$ is denoted $\mathcal{F}_m^{\sss (1,2)}$. Set ${\rm SWG}_1^{\sss (\underline{1},2)}=\{1\}$ and ${\rm SWG}_1^{\sss (1,\underline{2})}=\{2\}$. Given ${\rm SWG}_m^{\sss (\underline{1},2)}$ and ${\rm SWG}_m^{\sss (1,\underline{2})}$, the graphs ${\rm SWG}_{m+1}^{\sss (\underline{1},2)}$ and ${\rm SWG}_{m+1}^{\sss (1,\underline{2})}$ are constructed as follows:

\begin{itemize}
\item[1.] With probability $S_m^{\sss (\underline{1},2)}/(S_m^{\sss (\underline{1},2)}+\lambda S_m^{\sss (1,\underline{2})})$, pick a half-edge at random from the set $\mathcal{U}_m^{\sss (\underline{1},2)}$, and with the complementary probability, pick a half-edge at random from $\mathcal{U}_m^{\sss (1,\underline{2})}$. Write $x$ for the vertex that this half-edge is incident to.

\item[2.] Pick another half-edge at random from $\mathcal{U}_m^{\sss (1,2)}\cup\mathcal{F}_m^{\sss (1,2)}$ and write $y$ for the vertex that this half-edge is incident to.

\item[3.] If $y\not\in {\rm SWG}_m^{\sss (1,2)}$ and $x\in {\rm SWG}_m^{\sss (\underline{1},2)}$ -- that is, if $y$ is not yet explored and $x$ is type 1 infected -- then ${\rm SWG}_{m+1}^{\sss (\underline{1},2)}$ consists of ${\rm SWG}_m^{\sss (\underline{1},2)}$ along with the vertex $y$ and the edge $(x,y)$ while ${\rm SWG}_{m+1}^{\sss (1,\underline{2})}={\rm SWG}_m^{\sss (1,\underline{2})}$. Similarly, if $y\not\in {\rm SWG}_m^{\sss (1,2)}$ and $x\in {\rm SWG}_m^{\sss (1,\underline{2})}$, then ${\rm SWG}_{m+1}^{\sss (1,\underline{2})}$ consists of ${\rm SWG}_m^{\sss (1,\underline{2})}$ along with the vertex $y$ and the edge $(x,y)$ while ${\rm SWG}_{m+1}^{\sss (\underline{1},2)}={\rm SWG}_m^{\sss (\underline{1},2)}$.

\item[4.] If $y\in {\rm SWG}_m^{\sss (1,2)}$ -- that is, if $y$ is already explored -- then ${\rm SWG}_{m+1}^{\sss (1,2)}={\rm SWG}_m^{\sss (1,2)}$ and the selected half-edges are removed from the exploration process. Indeed, since both $x$ and $y$ are already infected, the edge will not be used to transfer the infection.
\end{itemize}

Note that, by Proposition \ref{prop:prel}, for $m=o(a_n)$, the graph ${\rm SWG}_m^{\sss (1,2)}$ consists whp of two disjoint components given by the SWGs obtained with one-type exploration from vertex 1 and vertex 2, respectively. In what follows, we will work both with quantities based on one-type exploration and on exploration under competition. Quantities based on a one-type process are equipped with a single superscript (e.g.\ $T_{m}^{\sss (1)}$), while quantities based on competition are equipped with double superscripts (e.g.\ $T_{m}^{\sss ({1,2})}$) (but will often be simplified).

\section{Proof of Theorem \ref{th:main}(a)}\label{sec:main_a}

In this section we prove Theorem \ref{th:main}(a). Recall that the randomness in the process is represented by one single Exp(1)-variable per edge, as described above. All random times based on the one-type exploration that appear in the sequel are based on these variables and are then multiplied by $\mu=1/\lambda$ to obtain the corresponding quantities for Exp($\lambda$)-variables. Following the notation in the previous section, we write $T_{a_n}^{\sss (i)}$ for $T_{a_n}$ when the growth is started from vertex $i$. Furthermore, for $i=1,2$, we write $V_i$ for the distributional limit as $n\to\infty$ of $T_{a_n}^{\sss (i)}$, where $V_i$ are characterized in Proposition \ref{prop:prel}(b). The main technical result is stated in the following proposition:

\begin{prop}\label{prop:type1}
Fix $\mu\leq 1$ and let $U$ be a vertex chosen uniformly at random from the vertex set. As $n\to\infty$,
	$$
	\PP\left(U \mbox{ is type 1 infected}\mid \,T_{a_n}^{\sss (1)}<\mu T_{a_n}^{\sss (2)}\right)\to 1
	$$
and
    $$
	\PP\left(U \mbox{ is type 2 infected}\mid\,T_{a_n}^{\sss (1)}>\mu T_{a_n}^{\sss (2)}\right)\to 1.
	$$
\end{prop}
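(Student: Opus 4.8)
The plan is to show that, conditionally on (say) $\Toan < \mu \Ttan$, the type~1 infection reaches \emph{some} hub of polynomially large degree well before type~2, and that from a single hub the type~1 infection floods essentially the whole giant component before type~2 can escape its own small neighbourhood. The heart of the argument is a ``race'' decomposition into three epochs: (i) until the faster type explodes, (ii) the interval in which the faster type conquers all the high-degree vertices, and (iii) the mopping-up phase. The key technical input is Proposition~\ref{prop:prel}: by part~(c), as long as $m_n \ll a_n$ the two SWGs are disjoint, so up to the explosion times the two infections evolve exactly as two \emph{independent} one-type first passage percolations, and by part~(b) the associated times $T^{\sss(i)}_{m_n}$ converge to the explosion times $V_i$.

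First I would fix a small $\vep>0$ and work on the event $\{\Toan < \mu \Ttan\}$, further intersected with $\{\mu \Ttan - \Toan > \delta\}$ for small $\delta$; since $V_1,V_2$ are i.i.d.\ proper (hence have no atoms after scaling—or at worst one handles the tie event separately, but $\PP(V_1=\mu V_2)=0$), the leftover probability is $o(1)$ as $\delta\to 0$. On this event the type~1 SWG reaches $a_n^{1-\vep}$ vertices at a time that is whp within $\delta/3$ of $V_1$, while type~2 has not yet reached $a_n^{1-\vep}$ vertices before time $V_1+\delta/3 < \mu\Ttan$. Next I would invoke the structural fact (from \cite{RGSc}, underlying the choice $a_n = n^{(\tau-2)/(\tau-1)}$ and the ``collision at $\Theta(a_n)$ dead individuals'' picture) that once the type~1 SWG has $\Theta(a_n)$ vertices it has, whp, swallowed a positive fraction of the highest-degree vertices—the hubs of degree $\Theta(n^{1/(\tau-1)})$—and from such a hub the remaining passage time to a uniform vertex $U$ is $o(1)$ whp, because a vertex of degree $d$ is within one edge of $d$ further vertices and the forward exploration from a hub dominates a supercritical (indeed infinite-mean) process that covers $(1-o(1))n$ vertices in time $o(1)$. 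Meanwhile, the type~2 infection, which has only grown to $o(a_n)$ vertices, is by Proposition~\ref{prop:prel}(b)–(c) still essentially a single independent branching process that has occupied only $o(n)$ vertices; so $U$ is whp grabbed by type~1. This gives $\PP(U\text{ type 1}\mid \Toan<\mu\Ttan)\to 1$, and the symmetric statement is identical with the roles of $1,2$ (and the factor $\mu$) swapped.

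The step I expect to be the main obstacle is making precise the claim that ``the type that explodes first captures all the hubs and then floods the graph in $o(1)$ time while the other type is still microscopic.'' Proposition~\ref{prop:prel} as quoted controls the exploration only up to $m_n \ll a_n$ vertices and only for a \emph{single} infection; here I need (a) that type~1 continues past $a_n$ and reaches the hubs before the clock runs out, (b) a uniform (in the other type's progress) bound showing type~2 cannot also reach a hub in the available window $(\Toan,\mu\Ttan)$, and (c) a flooding estimate from a hub to a uniform vertex. Item~(a) should follow from the $\Theta(a_n)$-collision analysis in \cite{RGSc} applied with vertex~1 against a ``phantom'' second source; item~(b) needs that before time $\mu\Ttan$ the type~2 cluster has $o(a_n)$ vertices, which is exactly Proposition~\ref{prop:prel}(c) combined with the gap $\mu\Ttan - V_1 - o(1) < $ (time for type~1 to explode) — here one must be careful that the two infections, once they \emph{do} start interacting, only ever slow type~2 down, so the independent-process upper bound on type~2's size remains valid; item~(c) is a short branching-process explosion estimate. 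I would organise these as a sequence of lemmas, then assemble them to conclude Proposition~\ref{prop:type1}, and finally note that Theorem~\ref{th:main}(a) follows by integrating over the conditioning event and using $\Toan \convd V_1$, $\Ttan\convd V_2$ independent.
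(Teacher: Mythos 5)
Your plan follows the paper's heuristic (the first type to explode grabs the hubs and then wins), but it has a genuine gap at exactly the step you flag, and the mechanism you propose to fill it would not suffice. The comparison you aim for --- type 1 floods from a hub to $U$ in $o(1)$ time while type 2 is still microscopic --- is the wrong comparison. The time from the high-degree vertices to a uniform vertex $U$ is not $o(1)$: by Proposition \ref{prop:prel}(d), $X(1\leftrightarrow U)$ converges to $V_1+V_U$, so after its explosion type 1 still needs an order-one time (essentially the explosion time of the exploration from $U$) to reach $U$. In that order-one window type 2 would itself explode, since $\mu V_2$ is finite and may exceed $V_1$ by an arbitrarily small amount; hence ``type 2 occupies $o(n)$ vertices at time $V_1$'' does not preclude type 2 from subsequently capturing a positive fraction. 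What decides the race is a quantitative strangulation estimate that your proposal does not contain: if type 2 is denied all vertices of degree at least $(\log n)^\gamma$ with $\gamma<1/(3-\tau)$, then its passage time to $U$ diverges --- Lemmas \ref{le:2} and \ref{lem-good} give a lower bound of order $(\log n)^{1-\gamma(3-\tau)}$ via a degree-truncated size-biased sum --- whereas type 1 reaches $U$ in bounded time (Lemma \ref{le:1}). It is this divergent-versus-bounded comparison, not $o(1)$-versus-microscopic, that closes the argument.

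Moreover, for the blocking itself you would need that type 1 captures \emph{all} vertices above a polylogarithmic degree threshold, not ``a positive fraction of the hubs of degree $\Theta(n^{1/(\tau-1)})$''; a positive fraction leaves type 2 free to explode through the remaining high-degree vertices. The paper proves (Lemma \ref{le:3}) that within an extra time $\varepsilon_n/2$, with $\varepsilon_n\geq c(\log\log n)^{-1}$, after $\Toan$ type 1 occupies every vertex of degree at least $(\log n)^\sigma$, $\sigma>1/(3-\tau)$, by building paths through vertices of rapidly increasing degrees up to $\mathcal{V}_n^{\rm max}$ (using the connectivity bound of Lemma \ref{le:conn_probab}); the gap between the thresholds $(\log n)^\gamma$ and $(\log n)^\sigma$ is bridged by the {\sf Good}-vertex device of Lemma \ref{lem-good}. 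One also needs that type 2 has not already touched any vertex of degree at least $(\log n)^\gamma$ by time $\Toan+\varepsilon_n$; this is Lemma \ref{le:4}, which holds because before its explosion type 2 consists of finitely many vertices of bounded degree. Your item (b) is phrased only for the window $(\Toan,\mu\Ttan)$ and only in terms of type 2's size, which is not enough: the hindrance must persist after $\mu\Ttan$, and it does so only because by then all polylog-degree vertices are immune (type 1), which is precisely what these lemmas establish. So your skeleton matches the paper's heuristic, but the missing lemmas (truncated passage-time lower bound, capture of all polylog-degree vertices within $\varepsilon_n$, and type 2's bounded pre-explosion degrees) constitute the actual content of the proof.
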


With this proposition at hand, Theorem \ref{th:main}(a) follows easily:

\begin{proof}[Proof of Theorem \ref{th:main}(a)] It follows from Proposition \ref{prop:type1} that
	$$
	\E[\bar{N}_1(n)\mid T_{a_n}^{\sss (1)}<\mu T_{a_n}^{\sss (2)}]=\PP\left(U \mbox{ is type 1 infected}
	\mid T_{a_n}^{\sss (1)}<\mu T_{a_n}^{\sss (2)}\right)\to 1,
	$$
and, similarly,
	$$
	\E[\bar{N}_1(n)\mid T_{a_n}^{\sss (1)}>\mu T_{a_n}^{\sss (2)}]
	=\PP\left(U \mbox{ is type 1 infected}\mid T_{a_n}^{\sss (1)}>\mu T_{a_n}^{\sss (2)}\right)\to 0.
	$$
By the Markov inequality, this implies that
	\eqan{
	\PP(\bar{N}_1(n)<1-\vep\mid T_{a_n}^{\sss (1)}<\mu T_{a_n}^{\sss (2)})
	&=\PP(\bar{N}_2(n)>\vep \mid T_{a_n}^{\sss (1)}<\mu T_{a_n}^{\sss (2)})\\
	&\leq \frac{1}{\vep}\E[\bar{N}_2(n)\mid T_{a_n}^{\sss (1)}<\mu T_{a_n}^{\sss (2)}]\to 0,\nn
	}
so that $\PP(\bar{N}_1(n)>1-\vep\mid T_{a_n}^{\sss (1)}<\mu T_{a_n}^{\sss (2)})\to 1$ for any $\vep>0$.
Similarly, $\PP(\bar{N}_1(n)<\vep\mid T_{a_n}^{\sss (1)}>\mu T_{a_n}^{\sss (2)})\to 1$ for any $\vep>0$. Since $\bar{N}_1(n)\in[0,1]$ and $\PP(T_{a_n}^{\sss (1)}<\mu T_{a_n}^{\sss (2)})\to\PP(V_1<\mu V_2)$, Theorem \ref{th:main}(a) follows from this. 
\end{proof}
\medskip

Let $\vep_n\searrow 0$, with $\vep_n\geq c/\log\log n$ for some constant $c$, and define $A_n=\{T_{a_n}^{\sss (1)}+\varepsilon_n<\mu (T_{a_n}^{\sss (2)}-\varepsilon_n)\}$. We remark that $\vep_n=c/\log\log n$ suffices for Lemma \ref{le:3}, but that we may have to take $\vep_n$ larger when applying Lemma \ref{le:4}. In order to prove Proposition \ref{prop:type1}, we will show that
	\begin{equation}
	\label{eq:type1_given_A_n}
	\PP\big(U\mbox{ is type 1 infected}\mid A_n\big)\to 1.
	\end{equation}
With $B_n=\{T_{a_n}^{\sss (1)}-\varepsilon_n>\mu (T_{a_n}^{\sss (2)}+\varepsilon_n)\}$, analogous arguments can be applied to show that $\PP(U\mbox{ is type 2 infected}\mid B_n)\to 1$. Since $\vep_n\searrow 0$, Proposition \ref{prop:type1} follows from this:

\begin{proof}[Proof of Proposition \ref{prop:type1}] Indeed, using that $A_n\subset \{T_{a_n}^{\sss (1)}<\mu T_{a_n}^{\sss (2)}\}$, we write
	\eqan{
	\PP\left(U \mbox{ is type 1 infected}\mid \,T_{a_n}^{\sss (1)}<\mu T_{a_n}^{\sss (2)}\right)
	&=\PP\left(U \mbox{ is type 1 infected}\mid A_n)
	\prob(A_n\mid \,T_{a_n}^{\sss (1)}<\mu T_{a_n}^{\sss (2)}\right)\\
	&\quad +\PP\left(\{U \mbox{ is type 1 infected}\}\cap A_n^c \mid \,T_{a_n}^{\sss (1)}<\mu T_{a_n}^{\sss (2)}\right),\nn
	}
Since $(T_{a_n}^{\sss (1)},T_{a_n}^{\sss (2)})\convd (V_1,V_2)$, where $(V_1,V_2)$ are independent with continuous distributions,
	\eqn{
	\lim_{n\rightarrow \infty}\prob(A_n)
	=\lim_{n\rightarrow \infty}\prob(T_{a_n}^{\sss (1)}<\mu T_{a_n}^{\sss (2)}),
	}
so that also $\prob(A_n^c \mid \,T_{a_n}^{\sss (1)}<\mu T_{a_n}^{\sss (2)})\to 0$. We conclude that $\PP\left(U \mbox{ is type 1 infected}\mid \,T_{a_n}^{\sss (1)}<\mu T_{a_n}^{\sss (2)}\right)\to 1$, as required.
\end{proof}
	
The proof of (\ref{eq:type1_given_A_n}) is divided into four parts, specified in Lemma \ref{le:1}-\ref{le:4} below. Recall that $X(u\leftrightarrow v)$ denotes the passage time between the vertices $u$ and $v$ in a one-type process with rate 1. We first observe that the one-type passage time from vertex 1 to a uniformly choosen vertex $U$ is tight:

\begin{lemma}[Tight infection times]\label{le:1}
For a uniformly chosen vertex $U$, $\PP\big(X(1\leftrightarrow U)< b_n\big)\to 1$ for all $b_n\to\infty$.
\end{lemma}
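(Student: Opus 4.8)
The plan is to deduce this directly from Proposition \ref{prop:prel}(d). That proposition tells us that, for a uniformly chosen vertex $U$, the passage time $X(1\leftrightarrow U)$ converges in distribution to $V_1+V_U$, where $V_1$ and $V_U$ are independent copies of the explosion-time random variable $V$ from Proposition \ref{prop:prel}(b). Since $V$ is a \emph{proper} random variable (it takes finite values almost surely), so is $V_1+V_U$. Hence for any sequence $b_n\to\infty$,
	$$
	\PP\big(X(1\leftrightarrow U)< b_n\big) \geq \PP\big(X(1\leftrightarrow U)\leq M\big)
	$$
for all $n$ large enough that $b_n>M$, and letting first $n\to\infty$ and then $M\to\infty$ gives $\liminf_{n\to\infty}\PP(X(1\leftrightarrow U)<b_n)\geq \PP(V_1+V_U\leq M)\to 1$. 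This yields the claim.

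A small technical point to handle carefully is that convergence in distribution only controls probabilities of the form $\PP(X(1\leftrightarrow U)\leq M)$ at continuity points $M$ of the limit law, and that the bound $b_n>M$ must hold eventually; both are immediate since $b_n\to\infty$ and since the set of continuity points is dense, so we may pick $M=M_j\to\infty$ through continuity points. One should also note that Proposition \ref{prop:prel}(d) is stated for \emph{two} vertices $u,v$; here we apply it with $u=1$ fixed and $v=U$ uniformly random, which is exactly the regime of \cite{RGSc} (and in any case vertex $1$ is itself exchangeable with a uniformly chosen vertex in the configuration model, as noted in the introduction).

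I do not expect any real obstacle here: the lemma is essentially a restatement of the finiteness of the limiting passage time, and the only thing to be slightly careful about is the uniform-in-$n$ comparison $b_n > M$ and the continuity-point issue, both of which are routine. The one place where one might want a word of justification is that the statement is asserted for \emph{all} $b_n\to\infty$ simultaneously, but since the argument above works for an arbitrary such sequence with no rate assumption, this is automatic.
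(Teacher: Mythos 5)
Your proposal is correct and is essentially the paper's own argument: the paper also deduces the lemma in one line from Proposition \ref{prop:prel}(d), namely that $X(1\leftrightarrow U)$ converges in distribution to a proper random variable, and your added details (continuity points, choosing $M$ along a sequence, eventually $b_n>M$) are just the routine tightness bookkeeping left implicit there.
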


\begin{proof}
Just note that, by Proposition \ref{prop:prel}(d), the passage time between vertices 1 and $U$ converges to a proper random variable.
\end{proof}

The second lemma states roughly that, if a certain subset $\Goodn$ of the vertices is blocked, then the (one-type) passage time from vertex 2 to a randomly chosen vertex $U$ is large. To formulate this in more detail, let $\gamma,\sigma>0$ be fixed such that $\gamma<1/(3-\tau)<\sigma$. Below we will require that they are both sufficiently close to $1/(3-\tau)$. We say that a vertex $v$ of degree $D_v\geq (\log n)^\gamma$ is {\sf Good} if either $D_v\geq (\log n)^\sigma$ or if $v$ is connected to a vertex $w$ with $D_w\geq (\log n)^\sigma$ by an edge having passage time $X(e)$ at most $\varepsilon_n/2$. We let $\Goodn$ be the set of ${\sf Good}$ vertices. Furthermore, with $\CMnD$ denoting the underlying graph obtained from the configuration model and $\Omega\subset [n]$ a vertex subset, we write $\CMnD\backslash \Omega$ for the same graph but where vertices in $\Omega$ do not take part in the spread of the infection, that is, the vertices are still present in the network but are declared {\em immune} to the infection.

\begin{lemma}[Avoiding the good set is expensive]
\label{le:2}
Let the vertex $U$ be chosen uniformly at random from the vertex set. For $\gamma$ and $\sigma$ sufficiently close to $1/(3-\tau)$, there exists $b'_n\to\infty$ such that
$$
\PP\Big(\mu X(2\leftrightarrow U)\geq b'_n \mbox{ in } \CMnD\backslash\Goodn\Big)\to 1.
$$
\end{lemma}

Combining Lemma \ref{le:1} and \ref{le:2} will allow us to prove that the randomly chosen vertex $U$ is \whp type 1 infected if all vertices in $\Goodn$ are occupied by type 1. In order to show that, conditionally on $A_n$, the latter is indeed the case, we need two lemmas. The first one states roughly that there is a fast path from any vertex $u\in\Goodn$ to the exploration graph ${\rm SWG}_{a_n}^{\sss (1)}$, consisting only of vertices with degree at least $(\log n)^\gamma$. Here, for a subgraph $G$ of $\CMnD$, we define $X(u\leftrightarrow G)=\min\{X(u\leftrightarrow v):v \mbox{ is a vertex of }G\}$.

\begin{lemma}[Good vertices are found fast]
\label{le:3}
We have that
$$
	\PP\Big(\exists u\in \Goodn\text{ with }
	X(u\leftrightarrow {\rm SWG}_{a_n}^{\sss (1)})>\varepsilon_n \mbox{ in } \CMnD\backslash
	\{v\colon D_v< (\log n)^\gamma\}\Big)\to 0.
	$$
\end{lemma}
\medskip

Write ${\rm SWG}^{\sss (v)}(t)$ for the exploration graph at real time $t$ with one-type exploration starting from vertex $v$, that is, ${\rm SWG}^{\sss (v)}(t)={\rm SWG}_{k_t}^{\sss (v)}$, where $k_t=\inf\{k:T_k^{\sss (v)}\leq t\}$. The second lemma states that the one-type exploration graph emanating from vertex 2 is still small (in terms of total degree) shortly before its explosion:

\begin{lemma}[The losing type only finds low-degree vertices]
\label{le:4}
For any $k_n\to\infty$, there exist $\varepsilon_n\searrow 0$, such that, \whp
	$$
	\sum_{v\in {\rm SWG}^{\sss (2)}\big(T_{a_n}^{\sss (2)}-\varepsilon_n\big)}D_v\leq k_n.
	$$
\end{lemma}

\begin{proof}
First recall the exploration process and the corresponding approximating continuous-time branching process from Section 2. For any fixed $\vep>0$, at time $T_{a_n}^{\sss (2)}-\vep$ only an a.s.\ finite number $M=M(\vep)$ of vertices have been explored. Hence, for any $k_n\to\infty$, it is clear that we can take $\vep_n\searrow 0$ so slowly that the total degree of the explored vertices at time $T_{a_n}^{\sss (2)}-\vep_n$ is at most $k_n$.
\end{proof}
\medskip

Combining Lemma \ref{le:3} and \ref{le:4}, we can now conclude that, conditionally on $A_n$, all vertices in $\Goodn$ are \whp occupied by type 1 in the competition model:

\begin{corr}[The good vertices are all found by the winning type]
\label{cor:Goodn_1}
As $n\rightarrow \infty$,
	$$
	\PP\Big(\Goodn\mbox{ is type 1 infected }\mid A_n\Big)\to 1.
	$$
\end{corr}

\begin{proof}[Proof of Corollary \ref{cor:Goodn_1}]
First take $k_n=(\log n)^\gamma$ in Lemma \ref{le:4}, and pick $\varepsilon_n\searrow 0$ such that
	$$
	\sum_{v\in {\rm SWG}^{\sss (2)}\big(T_{a_n}^{\sss (2)}-\varepsilon_n\big)}D_v\leq (\log n)^\gamma,
	$$
where we recall that the SWG is defined based on edge weights with mean 1 and without competition. We now explore the evolution of the infection under competition, starting from vertices 1 and 2, respectively, using the coupling of the passage time variables described at the end of Section \ref{sec:prel}. We extend the notation for the exploration graph to real time in the same way as for one-type exploration, that is, ${\rm SWG}^{\sss (\underline{1},2)}(t)$ and ${\rm SWG}^{\sss (1,\underline{2})}(t)$ denote the type 1 and the type 2 part, respectively, of the exploration graph under competition at real time $t$. Note that both these graphs are increasing in $t$ and that, for a fixed $t$, we have that ${\rm SWG}^{\sss (1,\underline{2})}(\mu t)\subset {\rm SWG}^{\sss (2)}(t)$, since the type 2 infection in competition is stochastically dominated by a time-scaled one-type process (recall that the type 2 passage times under competition are multiplied by $\mu$). Combining this, we conclude that, on $A_n$,
	$$
	{\rm SWG}^{\sss (1,\underline{2})}\big(T_{a_n}^{\sss (1)}+\varepsilon_n\big)
	\subset {\rm SWG}^{\sss (2)}\big(T_{a_n}^{\sss (2)}-\varepsilon_n\big)
	$$
and hence
	\begin{equation}\label{eq:SWG2_tiny}
	\sum_{v\in{\rm SWG}^{\sss (1,\underline{2})}\big(T_{a_n}^{\sss (1)}+\varepsilon_n\big)}
	D_v\leq (\log n)^\gamma.
	\end{equation}
Since all vertices have at least degree 2, this means in particular that the number of type 2 infected vertices at time $T_{a_n}^{\sss (1)}+\varepsilon_n$ under competition is at most $(\log n)^\gamma/2$. It follows from Proposition \ref{prop:prel}(c), that ${\rm SWG}_{a_n}^{\sss (1)}$ is \whp occupied by type 1 at time $T_{a_n}^{\sss (1)}$ also in the competition model.

Now assume that there is a vertex $u\in\Goodn$ that is type 2 infected. By Lemma \ref{le:3}, \whp there exists a path connecting $u$ to ${\rm SWG}_{a_n}^{\sss (1)}$, consisting only of vertices of degree at least $(\log n)^\gamma$, such that the total passage time of the path is at most $\varepsilon_n$. Since ${\rm SWG}_{a_n}^{\sss (1)}$ is \whp occupied by type 1 at time $T_{a_n}^{\sss (1)}$ (i.e., this remains true in the presence of competition), this means that, for $u$ to be type 2 infected, one of the vertices along this path has to be type 2 infected before time $T_{a_n}^{\sss (1)}+\vep_n$. However, since all vertices on the path have degree at least $(\log n)^\gamma$, this contradicts (\ref{eq:SWG2_tiny}).
\end{proof}
\medskip

Next, we combine Lemma \ref{le:1} and \ref{le:2} with Corollary \ref{cor:Goodn_1} into a proof of (\ref{eq:type1_given_A_n}):

\begin{proof}[Proof of (\ref{eq:type1_given_A_n})]
By Lemma \ref{le:1}, \whp there exists a path $\Pi$ from vertex 1 to $U$ with $X(1\leftrightarrow U)\leq b_n$, where $b_n\to\infty$ will be further specified below. If $U$ is type 2 infected in the competition model, then the type 2 infection has to interfere with this path, that is, some vertex on the path has to be type 2 infected. This implies that $\mu X(2\leftrightarrow U)\leq b_n(1+\mu)$. Indeed, the type 2 infection has to reach the path $\Pi$ in at most time $b_n$ (otherwise the whole path will be occupied by type 1), and once it has done so, the passage time to vertex 2 is at most $\mu b_n$ (recall the coupling of the passage time variables). We obtain that
	\begin{eqnarray*}
	\PP\left(U\mbox{ is type 2 infected}\mid A_n\right)
	& \leq & \PP\left(\mu X(2\leftrightarrow U)\leq b_n(1+\mu)\mid A_n\right)\\
	& \leq & \PP\left(\mu X(2\leftrightarrow U)\leq b_n(1+\mu)
	\mbox{ in } \CMnD\backslash\Goodn \mid A_n\right)\\
 	& & +\PP\left(\exists v\in\Goodn: v \mbox{ is type 2 infected}\mid A_n\right).
	\end{eqnarray*}
With $b_n=b_n'/(1+\mu)$, where $b'_n$ is chosen to ensure the conclusion of Lemma \ref{le:2}, the first term converges to 0 by Lemma \ref{le:2}. The last term converges to 0 by Corollary \ref{cor:Goodn_1}.
\end{proof}

It remains to prove Lemmas \ref{le:2} and \ref{le:3}. We begin with Lemma \ref{le:2}:

\begin{proof}[Proof of Lemma \ref{le:2}]
We first prove a version of the lemma where $\Goodn$ is replaced by the whole set $\{v\colon D_v\geq (\log n)^\gamma\}$. According to Proposition \ref{prop:prel}(b) and (d), the passage time $X(2\leftrightarrow U)$ is \whp at most $T_{n^\rho}^{\sss (2)}+T_{n^\rho}^{\sss(U)}+\vep_n$ for some $\vep_n\searrow 0$, where $\rho$ is the exponent of the exact coupling in Proposition \ref{prop:prel}(a). If only vertices with degree smaller than $(\log n)^\gamma$ are active, then \whp
	\begin{equation}\label{eq:TU_dist}
	T_{n^\rho}^{\sss (U)}\stackrel{d}{=}\sum_{k=1}^{n^\rho}\frac{E_k}{\widetilde{S}^{{\rm\sss(trun)}}_k},
	\end{equation}
where
	$$
	\widetilde{S}^{{\rm\sss(trun)}}_k  =  D_{\sss U}\cdot
	\indic{D_U\leq (\log n)^\gamma}+\sum_{i=2}^k(\widetilde{B}_i-1)
	\cdot \indic{\widetilde{B}_i\leq (\log n)^\gamma}
	$$
for an i.i.d.\ sequence $(\widetilde{B}_i)_{i=2}^{n^\rho}$ with distribution (\ref{eq:forward_degree}), that is, a power law with exponent $\tau-1$. Let $f(n)\sim g(n)$ denote that $c\leq f(n)/g(n)\leq c'$ in the limit as $n\to\infty$ (whp when $f(n)$ is random), where $c\leq c'$ are strictly positive constants. Often, we will be able to take $c=c'$, meaning that $f(n)/g(n)$ converges to $c$ (in probability when $f(n)$ is random), but the more general definition is needed to handle the assumption (A2) on the degree distribution. We calculate that
	$$
	\E\big[(\widetilde{B}_i-1)\cdot \indic{\widetilde{B}_i\leq (\log n)^\gamma}\big]
	\sim \sum_{j=1}^{(\log n)^\gamma}j^{-(\tau-2)}\sim (\log n)^{\gamma(3-\tau)},
	$$
and that
	$$
	{\rm Var}\big((\widetilde{B}_i-1)\cdot \indic{\widetilde{B}_i\leq (\log n)^\gamma}\big)
	\leq \E\big[(\widetilde{B}_i)^2\cdot \indic{\widetilde{B}_i\leq (\log n)^\gamma}\big]
	\sim \sum_{j=1}^{(\log n)^\gamma}j^{(3-\tau)}\sim (\log n)^{\gamma(4-\tau)},
	$$
so that $\E[\widetilde{S}^{{\rm\sss(trun)}}_{k}]\sim k (\log n)^{\gamma(3-\tau)}$ and $\mathrm{Var}(\widetilde{S}^{{\rm\sss(trun)}}_{k})\sim k (\log n)^{\gamma(4-\tau)}$. Furthermore, trivially, for any $a>0$,
	$$
	T_{n^\rho}^{\sss (U)}\geq
	\sum_{k=(\log{n})^a}^{n^\rho}\frac{E_k}{k}\cdot\frac{k}{\widetilde{S}^{{\rm\sss(trun)}}_k}.
	$$
We now claim that \whp $\widetilde{S}^{{\rm\sss(trun)}}_k\leq Ck(\log n)^{\gamma(3-\tau)}$ for all $k\in[(\log{n})^a,n^\rho]$ and some constant $C$. To see this, note that $\widetilde{S}^{{\rm\sss(trun)}}_{k+1}\geq \widetilde{S}^{{\rm\sss(trun)}}_k$ so that it suffices to show that
	$$
	\PP\left(\exists l\colon \widetilde{S}^{{\rm\sss(trun)}}_{k_l}
	>Ck_l(\log n)^{\gamma(3-\tau)}\right)\to 0,
	$$
where $k_l=2^l (\log{n})^a$ and $l$ is such that $2^l (\log{n})^a\in[(\log{n})^a,n^\rho]$. We fix $l$ and $k=2^l (\log{n})^a$. With $C$ chosen such that $Ck(\log n)^{\gamma(3-\tau)}\geq 2\E[\widetilde{S}^{{\rm\sss(trun)}}_{k}]$, by the Chebyshev inequality,
	$$
	\PP\left(\widetilde{S}^{{\rm\sss(trun)}}_{k}>Ck(\log n)^{\gamma(3-\tau)}\right) 	
	\leq \PP\left(\widetilde{S}^{{\rm\sss(trun)}}_{k}
	>2\E[\widetilde{S}^{{\rm\sss(trun)}}_{k}]\right)
	\leq \frac{{\rm Var}(\widetilde{S}^{{\rm\sss(trun)}}_{k})}
	{\E[\widetilde{S}^{{\rm\sss(trun)}}_{k}]^2}
	\sim  \frac{(\log n)^{\gamma(\tau-2)}}{k}.
	$$
We substitute $k=2^l (\log{n})^a$ and use the union bound to obtain that
	$$
	\PP\left(\exists l\colon \widetilde{S}^{{\rm\sss(trun)}}_{k_l}>Ck_l(\log n)^{\gamma(3-\tau)}\right)
	\leq \sum_{l\geq 0} \frac{(\log n)^{\gamma(\tau-2)}}{k_l},
	$$
which clearly converges to 0 when $k_l=2^l (\log{n})^a>(\log{n})^a$ and $a>0$ is sufficiently large. It follows that, whp,
	$$
	T_{n^\rho}^{\sss (U)}\geq \frac{1}{C(\log n)^{\gamma(3-\tau)}}\sum_{k=(\log{n})^a}^{n^\rho}E_k/k,
	$$
where $\sum_{k=(\log{n})^a}^{n^\rho}E_k/k\sim \log n$. If $\gamma<1/(3-\tau)$, then $\kappa:=1-\gamma(3-\tau)>0$ and the desired conclusion follows with $b'_n=c(\log n)^\kappa$.

We now describe how to adapt the above arguments to obtain the statement of the lemma. Recall that a vertex $v$ of degree $D_v\geq (\log n)^\gamma$ is called {\sf Good} if either $D_v\geq (\log n)^\sigma$ or if $v$ is connected to a vertex $w$ with $D_w\geq (\log n)^\sigma$ by an edge having passage time $X(e)$ at most $\varepsilon_n/2$, and  $\Goodn$ is the set of ${\sf Good}$ vertices. When only the vertices in $\Goodn$ are inactive -- instead of the whole set $\{v:D_v\geq (\log n)^\gamma\}$ -- the denominator in (\ref{eq:TU_dist}) becomes
	$$
	\widetilde{S}^{{\rm\sss(trun)}}_k  =  D_{\sss U}\cdot
	\indic{U\not\in \Goodn}+\sum_{i=2}^k(\widetilde{B}_i-1)
	\cdot \indic{W_i\not\in \Goodn},
	$$
with $W_i$ denoting the vertex that corresponds to the forward degree $\widetilde{B}_i$. Since $k\leq n^{\rho}$, the probability that a vertex $v$ of degree $D_v\geq (\log n)^\gamma$ found in the exploration is {\sf Good} is, irrespective of all randomness up to that point, at least
	$$
	\prob({\rm Bin}(m_n, p_n)\geq 1),
	$$
with $m_n=(\log n)^\gamma-1$ and $p_n=\prob(E\leq \varepsilon_n/2) \E[J_n/L_n]$, where $J_n=\sum_{i\in[n]} D_i\indic{D_i\geq (\log n)^\sigma}-n^{\rho} (\log n)^\sigma$ and $L_n$ is the total degree of all vertices. Here, $n^{\rho} (\log n)^\sigma$ is an upper bound on the number of half-edges attached to vertices that have already been explored. Note that the knowledge that a vertex $W_i\not\in\Goodn$ gives information on the edge weights of edges connecting it to neighbors of degree at least $(\log{n})^\sigma$, but does not affect the distribution of edge weights on its other edges. Hence
	$$
	\widetilde{S}^{{\rm\sss(trun)}}_k
	\succeq \bar{S}^{{\rm\sss(trun)}}_k\equiv D_{\sss U}I_1+\sum_{i=2}^k(\widetilde{B}_i-1)
	\cdot (\indic{\widetilde{B}_i\leq (\log{n})^\gamma} +\indic{\widetilde{B}_i> (\log{n})^\gamma}I_i),
	$$
where $(I_i)_{i\geq 1}$ are i.i.d.\ Bernoulli's with success probability $\prob({\rm Bin}(m_n, p_n)=0)$ that are independent from the exponential variables $(E_i)_{i\geq 1}$ in \eqref{eq:TU_dist}. Since $\rho<1$, we can bound that
	\eqn{
	\label{pn-bd}
	p_n\geq \varepsilon_n \E[J_n/L_n] \sim \varepsilon_n (\log{n})^{-\sigma(\tau-2)}.
	}
Now we can repeat the steps in the proof of Lemma \ref{le:1}, instead using that
	\eqan{
	\E[(\widetilde{B}_i-1)I_i]&
	\sim \sum_{j=1}^{(\log n)^\gamma}j^{-(\tau-2)}+\sum_{j=(\log n)^\gamma}^{(\log{n})^\sigma}
	j^{-(\tau-2)}\prob({\rm Bin}(m_n, p_n)=0)\nn\\
	&\sim (\log n)^{\gamma(3-\tau)}
	+ (\log n)^{\sigma(3-\tau)}\prob({\rm Bin}(m_n, p_n)=0),\nn
	}
and, using \eqref{pn-bd},
$$
\prob({\rm Bin}(m_n, p_n)=0)=(1-p_n)^{m_n}\leq
\e^{-c\varepsilon_n (\log{n})^{\gamma-\sigma(\tau-2)}}.
$$
Since $\gamma<1/(3-\tau)$ and $\sigma>1/(3-\tau)$ can each be chosen as close to $1/(3-\tau)$ as we wish, we have that $\prob({\rm Bin}(m_n, p_n)=0)\leq \e^{-c\varepsilon_n (\log{n})^{\alpha}}$ for some $\alpha>0$. As a result, if $\varepsilon_n\geq c(\log\log n)^{-1}$, then $\E[(\widetilde{B}_i-1)I_i]$ obeys almost the same upper bound as $\E[(\widetilde{B}_i-1)\cdot \indic{\widetilde{B}_i\leq (\log n)^\gamma}]$ in the proof of Lemma \ref{le:2}. It is not hard to see that also ${\rm Var}((\widetilde{B}_i-1)I_i)$ obeys a similar bound as ${\rm Var}((\widetilde{B}_i-1)\cdot \indic{\widetilde{B}_i\leq (\log n)^\gamma})$. The steps in the proof of Lemma \ref{le:2} can then be followed verbatim.
\end{proof}

In order to prove Lemma \ref{le:3}, we will need the following bound, derived in \cite[(4.36)]{HofHooZna07}:

\begin{lemma}[van der Hofstad, Hooghiemstra, Znamenski (2007)]\label{le:conn_probab}
Let $\Gamma$ and $\Lambda$ be two disjoint vertex sets and write $\Gamma~\not\!\!\!\longleftrightarrow \Lambda$ for the event that no vertex in $\Gamma$ is connected to a vertex in $\Lambda$. Write $D_\Gamma$ and $D_\Lambda$ for the total degree of the vertices in $\Gamma$ and $\Lambda$, respectively, and $L_n$ for the total degree of all vertices. Furthermore, let $\prob_n$ be the conditional probability of the configuration model given the degree sequence $(D_i)_{i=1}^n$. Then,
	\begin{equation}
	\label{eq:lambda_conn_gamma}
	\prob_n(\Gamma ~\not\!\!\!\longleftrightarrow \Lambda)\leq \e^{-D_\Gamma D_\Lambda/(2L_n)}.
	\end{equation}
\end{lemma}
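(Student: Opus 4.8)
The bound \eqref{eq:lambda_conn_gamma} is quoted from \cite[(4.36)]{HofHooZna07}, but here is how I would prove it from scratch. The plan is to expose the pairing of the half-edges incident to $\Gamma$ one at a time and to bound, at each step, the conditional probability of \emph{not} being matched into $\Lambda$. Everything is done under $\prob_n$, so the uniform pairing of the $L_n$ half-edges is the only randomness, and we generate it sequentially. First I would order the $D_\Gamma$ half-edges of $\Gamma$ arbitrarily as $h_1,\dots,h_{D_\Gamma}$ and process them in turn: when $h_i$ is reached, do nothing if it is already matched, and otherwise match it to a uniformly chosen half-edge among all currently unmatched ones (excluding $h_i$ itself); call this step \emph{active} in the second case. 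An active step uses either one or two of the half-edges of $\Gamma$ — the matched one, and possibly its partner when the partner also lies in $\Gamma$ — so the number $K$ of active steps satisfies $D_\Gamma/2\le K\le D_\Gamma$.

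The core estimate is on the conditional avoidance probability at an active step. Suppose no edge between $\Gamma$ and $\Lambda$ has been created so far. Then no half-edge of $\Lambda$ has yet been matched, since a $\Lambda$-half-edge can only get matched by being picked as the partner of some $h_i\in\Gamma$, which would create such an edge; hence all $D_\Lambda$ half-edges of $\Lambda$ are still available when the current active step draws its partner, while the total number of candidate partners is at most $L_n-1$. Therefore the conditional probability that this step again avoids $\Lambda$ is at most $1-D_\Lambda/L_n$. Multiplying this over the $K\ge D_\Gamma/2$ active steps and using $1-x\le\e^{-x}$ gives
	$$
	\prob_n(\Gamma\not\leftrightarrow\Lambda)\le\Bigl(1-\frac{D_\Lambda}{L_n}\Bigr)^{D_\Gamma/2}\le\e^{-D_\Gamma D_\Lambda/(2L_n)},
	$$
which is \eqref{eq:lambda_conn_gamma}.

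I do not expect a genuine obstacle; two small points need care. First, the number $K$ of active steps is itself random, so to turn the informal product into a rigorous bound one runs the exposure of all $D_\Gamma$ half-edges to completion and observes that $M_i:=\1\{\text{no }\Gamma\text{-}\Lambda\text{ edge after processing }h_1,\dots,h_i\}\cdot(1-D_\Lambda/L_n)^{-(\text{number of active steps among the first }i)}$ is a nonnegative supermartingale with $M_0=1$, whose terminal value is at least $\1\{\Gamma\not\leftrightarrow\Lambda\}\cdot(1-D_\Lambda/L_n)^{-D_\Gamma/2}$ since $K\ge D_\Gamma/2$ and $(1-D_\Lambda/L_n)^{-1}\ge 1$; taking expectations yields the claim. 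Second, one should keep the worst case in mind in two places: the constant $\tfrac12$ in the exponent is sharp only when the half-edges of $\Gamma$ pair up among themselves, so that merely $\lceil D_\Gamma/2\rceil$ of them ever have a chance to reach $\Lambda$, and the per-step estimate $1-D_\Lambda/L_n$ uses only that the pool of candidate partners at each active step has size at most $L_n$ (it has size at most $L_n-1$), so its fraction lying in $\Lambda$ is at least $D_\Lambda/L_n$. An alternative would be to write $\prob_n(\Gamma\not\leftrightarrow\Lambda)$ as a ratio of matching counts and estimate that directly, but the sequential exposure both produces the clean constant $\tfrac12$ and keeps the conditioning on the degree sequence transparent.
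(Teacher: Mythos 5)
Your proposal is correct, but it is worth pointing out that the paper itself does not prove this lemma at all: it is quoted verbatim from \cite[(4.36)]{HofHooZna07}, so there is no internal argument to compare against, and what you have supplied is a self-contained substitute for that citation. Your sequential-exposure argument is sound at every point I checked: (i) since only half-edges of $\Gamma$ are ever the initiators and $\Gamma\cap\Lambda=\varnothing$, a half-edge of $\Lambda$ can be consumed only by creating a $\Gamma$--$\Lambda$ edge, so on the avoidance event all $D_\Lambda$ of them are still unmatched at every active step; (ii) the candidate pool at an active step has size at most $L_n-1$, giving a per-step avoidance probability at most $1-D_\Lambda/(L_n-1)\le 1-D_\Lambda/L_n$; (iii) each active step matches at most two half-edges of $\Gamma$, so the number of active steps is at least $D_\Gamma/2$, which is exactly where the factor $\tfrac12$ in the exponent comes from; and (iv) the supermartingale $M_i$ correctly handles the randomness of the number of active steps, since whether step $i+1$ is active is measurable with respect to the history up to step $i$, the indicator can only drop, and on the avoidance event the conditional factor $(1-D_\Lambda/L_n)^{-1}$ is compensated by the per-step bound, so $\E[M_{D_\Gamma}]\le 1$ yields $\prob_n(\Gamma\not\leftrightarrow\Lambda)\le(1-D_\Lambda/L_n)^{D_\Gamma/2}\le \e^{-D_\Gamma D_\Lambda/(2L_n)}$. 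The only cosmetic remark is that the degenerate cases $D_\Gamma=0$ or $D_\Lambda=0$ are trivially covered since the right-hand side is then $1$, and your alternative route via counting matchings would indeed also work but is messier; the exposure argument keeps the conditioning on the degree sequence explicit, which is all that $\prob_n$ requires.
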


\begin{proof}[Proof of Lemma \ref{le:3}]
By definition of $\Goodn$, any vertex $u\in \Goodn$ is connected to a vertex $w$ with $D_w\geq (\log n)^\sigma$ by an edge with weight at most $\vep_n/2$. Write $D_{\rm max}=\max_{i\in[n]} D_i$ for the maximal degree, and denote $\mathcal{V}_{\rm max}=\{v \colon D_v=D_{\rm max}\}$.  We will show that, for each vertex $v_{\rm max}\in\mathcal{V}_{\rm max}$,
	\begin{equation}
	\label{eq:w_max}
	\PP\left(D_w\geq (\log n)^\sigma, X(w\leftrightarrow v_{\rm max})>\varepsilon_n/4
	\mbox{ in } \CMnD\backslash \{v\colon D_v< (\log n)^\gamma\}\right)=o(1/n),
	\end{equation}
and
	\begin{equation}\label{eq:1_max}
	\PP(X(1\leftrightarrow v_{\rm max})>T_{a_n}^{\sss (1)}+\varepsilon_n/4 \mbox{ in }
	\CMnD\backslash \{v\colon D_v< (\log n)^\gamma\})=o(1).
	\end{equation}
Lemma \ref{le:3} follows from this by noting that
	\eqan{
	& \PP(\exists w\colon D_w\geq (\log n)^\sigma,
	X(w\leftrightarrow {\rm SWG}_{a_n}^{\sss (1)})>\vep_n/2 \mbox{ in } \CMnD\backslash \{v\colon D_v< (\log n)^\gamma\})\nn\\
	& \qquad \leq n\PP(D_w\geq (\log n)^\sigma,X(w\leftrightarrow v_{\rm max})>\vep_n/4 \mbox{ in } \CMnD\backslash \{v:D_v< (\log n)^\gamma\})\nn\\
	&\qquad \quad + \PP(X(1\leftrightarrow v_{\rm max})>T_{a_n}^{\sss (1)}+\vep_n/4 \mbox{ in } \CMnD\backslash \{v:D_v< (\log n)^\gamma\})=o(1).\nn
	}
To prove (\ref{eq:w_max}), we will construct a path $v_0,\ldots,v_m$ with $v_0=w$ and $v_m=v_{\rm max}$ and with the property that the passage time for the edge $(v_i,v_{i+1})$ is at most $(\log D_{v_i})^{-1}$, while $D_{v_i}\geq (\log n)^{\alpha_i}$ where $\alpha_i$ grows exponentially in $i$. The total passage time along the path is hence smaller than
	\begin{equation}\label{eq:sum}
	\sum_{i=1}^m \frac{1}{\log D_{v_i}}\leq\sum_{i=1}^m \frac{1}{\log\big((\log n)^{\alpha_i}\big)}
	\leq\frac{1}{\log\log n}\sum_{i=1}^m\frac{1}{\alpha_i}
	=O\left(\frac{1}{\log\log n}\right),
	\end{equation}
which is smaller than $\varepsilon_n/4$ since $\vep_n\geq c(\log\log{n})^{-1}$ where $c>0$ can be chosen appropriately.

Say that an edge emanating from a vertex $u$ is \emph{fast} if its passage time is at most $1/(\log D_u)$ and write $D_u^{\rm fast}$ for the number of such edges. Note that
	$$
	\E[D_u^{\rm fast}\mid D_u]=D_u[1-\e^{-1/\log D_u}]
	=\frac{D_u}{\log D_u}\left[1+O\left(\frac{1}{\log D_u}\right)\right]
	$$
and that, by standard concentration inequalities,
	$$
	\prob(D_u^{\rm fast}\leq D_u/[2\log D_u]\mid D_u)\leq \e^{-c D_u/\log D_u}.
	$$	
Indeed, conditionally on $D_u=d$, we have that $D_u^{\rm fast}\stackrel{d}{=} {\rm Bin}(d,1-\e^{-1/\log d})$ and, for any $p$, it follows from standard large deviation techniques that
	\eqn{
	\label{bin-conc}
	\prob({\rm Bin}(d,p)\leq pd/2)\leq \e^{-pd(1-\log 2)/2},
	}
see e.g.\ \cite[Corollary 2.18]{Remco_notes}. In particular, if $D_u\geq (\log n)^\sigma$ with $\sigma>1$, we obtain that
	\eqn{
	\label{eq:many_fast}
	\prob\Big(\exists u\colon D_u\geq (\log{n})^{\sigma}, D_u^{\rm fast}\leq D_u/[2\log D_u]\Big)	 
	\leq n\e^{-c(\log{n})^{\sigma}/\log((\log{n})^{\sigma})}
	=o(1).
	}
Thus, we may assume that $D_u^{\rm fast}>D_u/[2\log(D_u)]$ for any $u$ with $D_u\geq (\log{n})^{\sigma}$.

Write $\Lambda_i=\{u\colon D_u\geq \eta_i\}$, where $\eta_i$ will be defined below and shown to equal $(\log n)^{\alpha_i}$ for an exponentially growing sequence $(\alpha_i)_{i\geq 1}$. Furthermore, let $\Gamma(u)$ denote the set of fast half-edges from a vertex $u$. We now construct the aforementioned path connecting $w$ and $\mathcal{V}_n^{\rm max}$ iteratively, by setting $v_0:=w$ and then, given $v_i$, defining $v_{i+1}\in\Lambda_{i+1}$ to be the vertex with smallest index such that a half-edge in $\Gamma(v_i)$ is paired to a half-edge incident to $v_{i+1}$. We need to show that, with sufficiently high probability, such vertices exist all the way up until we have reached $\mathcal{V}_n^{\rm max}$. This will follow basically by observing that, for any vertex $u_i\in\Lambda_i$, we have by Lemma \ref{le:conn_probab} that
	\begin{equation}\label{li_conn_lai1}
	\prob_n(\Gamma(u_i)\not\leftrightarrow \Lambda_{i+1})
	\leq \E_{n}\left[\e^{-D_{u_i}^{\rm fast}D_{\Lambda_{i+1}}/(2L_n)}\right],
	\end{equation}
where the expectation is over the randomness in the edge weights used for defining $D_{u_i}^{\rm fast}$, and then combining this with suitable estimates of the exponent.

First we define the sequence $(\eta_i)_{i\geq 1}$. To this end, let $\eta_1=(\log n)^{\sigma}$ and define $\eta_{i}$ for $i\geq 2$ recursively as
	\eqn{
	\label{eta-recursion}
	\eta_{i+1}=\Big(\frac{\eta_i}{\log{n}}\Big)^{(1-\delta)/(\tau-2)},
	}
where $\delta\in(0,1)$ will be determined below. To identify $(\eta_i)_{i\geq 1}$, write $\eta_i=(\log{n})^{\alpha_i}$ and check that $(\alpha_i)_{i\geq 1}$ satisfy $\alpha_1=\sigma$ and the recursion 
	$$
	\alpha_{i+1}=\frac{1-\delta}{\tau-2} \alpha_i-\frac{1-\delta}{\tau-2}.
	$$
As a result, when $\delta<3-\tau$ so that $(1-\delta)>(\tau-2)$, we can bound
	\eqan{
	\alpha_i&=\alpha_1 \Big(\frac{1-\delta}{\tau-2} \Big)^{i-1}
	-\sum_{j=1}^{i-1} \Big(\frac{1-\delta}{\tau-2} \Big)^{j}\nn\\
	&=\alpha_1 \Big(\frac{1-\delta}{\tau-2} \Big)^{i-1}-\frac{\Big(\frac{1-\delta}{\tau-2} \Big)^{i-1}-1}{1-\frac{\tau-2}{1-\delta}}\nn\\
	&=\Big[\alpha_1-\frac{1}{1-\frac{\tau-2}{1-\delta}}\Big]\left(\frac{1-\delta}{\tau-2}\right)^i
	+\frac{1}{1-\frac{\tau-2}{1-\delta}},\nn
	}
which is strictly increasing and grows exponentially as long as $\alpha_1=\sigma>(1-\delta)/[3-\tau-\delta]$, that is, $\delta<[\sigma(3-\tau)-1]/(\sigma-1)$. Since $\sigma>1/(3-\tau)>1$, this is indeed possible. With $\sigma>1/(3-\tau)$, we then see that $i\mapsto \alpha_{i}$ is strictly increasing and grows exponentially for large $i$.

We next proceed to estimate the exponent in (\ref{li_conn_lai1}). We first recall some facts proved in \cite{HofHooZna07}. First, under the assumption of our paper, it is shown in \cite[(A.1.23)]{HofHooZna07} that there exist $a>1/2$ and $\chi>0$ such that
$$
\prob(|L_n-n\expec[D]|>n^a)\leq n^{-\chi}.
$$
Further, in \cite[Lemma A.1.3]{HofHooZna07}, it is shown that for every $b<1/(\tau-1)$, there exists a $\xi>0$ such that
\begin{equation}\label{eq:G}
\prob\big(\exists x\leq n^{b}\colon  |G_n(x)-G(x)|\geq n^{-\xi}[1-G(x)]\big)\leq n^{-\xi},
\end{equation}
where
$$
G_n(x)=\frac{1}{L_n}\sum_{i\in[n]} D_i\indic{D_i\leq x},
\qquad
\text{and}
\qquad
G(x)=\frac{\expec[D\indic{D\leq x}]}{\expec[D]}.
$$
We will work with $\prob_n$, and condition the degrees to be such that the event $F_n$ occurs, where
\begin{eqnarray*}
F_n&=\big\{|L_n-n\expec[D]|\leq n^a\big\}\cap
\big\{\forall x\leq n^b\colon  |G_n(x)-G(x)|\leq n^{-\xi}[1-G(x)]\big\}\\
&\qquad
\cap\big\{D_u^{\rm fast}\geq D_u/[2\log(D_u)]~\forall u
\text{ with\ }D_u\geq (\log{n})^{\sigma}\big\} ,\nn
\end{eqnarray*}
so that in particular $\prob(F_n^c)\leq n^{-\xi}+n^{-\chi}+o(1)=o(1)$.

On the event $F_n$, as long as $\eta_{i+1}\leq n^{(1-\delta/2)/(\tau-1)}$ (this is to ensure that (\ref{eq:G}) is valid with $b=\eta_{i+1}$)
	$$
	\frac{D_{\Lambda_{i+1}}}{L_n}=\frac{1}{L_n}\sum_{v\in[n]} D_v\indic{D_v>\eta_{i+1}}
	\geq c\E[D\indic{D>\eta_{i+1}}]\geq c\eta_{i+1}^{-(\tau-2)}.
	$$
Furthermore, for every vertex $u_i\in \Lambda_i$, we obtain as in (\ref{eq:many_fast}) that
	$$
	D_{u_i}^{\rm fast}\geq \frac{D_{u_i}}{2\log D_{u_i}}\geq \frac{\eta_i}{2\log \eta_i},
	$$
where the first inequality holds with probability $1-o(1/n)$. Combining these two estimates and applying Lemma \ref{le:conn_probab} gives that
	$$
	\prob_n(\Gamma(u_i)~\not\!\!\!\longleftrightarrow \Lambda_{i+1})
	\leq \exp\{-c(\eta_i/\log(\eta_i)) \eta_{i+1}^{-(\tau-2)}\}.
	$$
Using \eqref{eta-recursion} and the fact that $\eta_i=(\log n)^{\alpha_i}$ it follows that
\begin{eqnarray*}
	\prob_n(\Gamma(u_i)~\not\!\!\!\longleftrightarrow \Lambda_{i+1})
	& \leq & \exp\{-c(\eta_i^{\delta}/\log(\eta_i)) \cdot (\log{n})^{(1-\delta)}\}\\
	& \leq & \exp\{-c((\log n)^{1+\delta(\alpha_i-1)}/\log(\eta_i)) \},
\end{eqnarray*}
which is $o(n^{-a})$ for any $a>0$. Taking $a>3$, this implies that, as long as $\eta_i\leq n^{(1-\delta/2)/(\tau-1)}$,
	$$
	\prob_n(\exists i \text{ and }u_i\in \Lambda_i\colon \Gamma(u_i)~\not\!\!\!\longleftrightarrow  \Lambda_{i+1})
	=o(1/n).
	$$
Hence, as long as $\eta_i\leq n^{(1-\delta/2)/(\tau-1)}$, the probability that the construction of the path $(v_i)_{i\geq 1}$ fails in some step is $o(1/n)$.

Let $i^*=\max\{i\colon \eta_i\leq n^{(1-\delta/2)/(\tau-1)}\}$ be the largest $i$ for which $\eta_i$ is small enough to guarantee that the failure probability is suitably small. The path $v_0, \ldots, v_{i^*}$ then has the property that $D_{v_i}\geq (\log n)^{\alpha_i}$ and the passage time on the edge $(v_i,v_{i+1})$ is at most $(\log D_{v_i})^{-1}$, as required. To complete the proof of \eqref{eq:w_max}, it remains to show that, with probability $1-o(1/n)$, the vertex $v_{i^*}$ has an edge with vanishing weight connecting to the vertex $v_{\rm max}\in\mathcal{V}_{\rm max}$.

To this end, note that, using (\ref{eta-recursion}) and the definition of $i^*$, we can bound
	$$
	D_{v_{i^*}}\geq \eta_{i^*}\geq \eta_{i^*+1}^{(\tau-2)/(1-\delta)}\log n
	\geq n^{\frac{(1-\delta/2)(\tau-2)}{(1-\delta)(\tau-1)}}.
	$$
Furthermore, $D_{\rm max}\geq n^{(1-h\delta)/(\tau-1)}$ with probability $1-o(1/n)$ for any $h>0$, since
	\begin{equation}\label{eq:max_calc}
	\PP(D_{\rm max}\geq x) \leq 1-(1-cx^{-(\tau-1)})^n,
	\end{equation}
which decays stretched exponentially for $x=n^{(1-h\delta)/(\tau-1)}$. Define $\psi=[(1-\delta/2)(\tau-2)]/[(1-\delta)(\tau-1)]$ and $\phi=(1-h\delta)/(\tau-1)$, where $h$ will be specified below. Assuming that $D_{v_{i^*}}=n^{\psi}$ and $D_{\rm max}=n^{\phi}$, the number $H$ of (multiple) edges between $v_{i^*}$ and $v_{\rm max}$ is hypergeometrically distributed with
$$
\E[H]=n^{\psi}\cdot\frac{n^\phi}{n-n^{\psi}}\sim n^{\psi+\phi-1},
$$
where
	$$
	\psi+\phi-1=\frac{\delta}{2(1-\delta)(\tau-1)}[\tau+2h\delta-2(1+h)],
	$$
which is positive as soon as $h<(\tau-2)/(1-\delta)$. It is not hard to see -- e.g.\ by coupling $H$ to a binomial variable and using (\ref{bin-conc}) -- that $\PP\left(H\leq \E[H]/2\right)\leq \e^{-cn^{\psi+\phi-1}}$. Hence, with probability $1-o(1/n)$, the vertex $v_{i^*}$ is connected to $v_{\rm max}$ by at least $\E[H]/2\sim n^{\psi+\phi-1}$ edges. Let $(E_i)_{i\geq 1}$ be an i.i.d.\ sequence of Exp(1)-variables. The probability that all edges connecting $v_{i^*}$ and $v_{\rm max}$ have passage time larger than $1/\log n$ is then bounded from above by
	$$
	\PP\big(E_i>1/\log n\big)^{n^{\psi+\phi-1}}=\e^{-n^{\psi+\phi-1}/\log n}
	=o(1/n).
	$$
This completes the proof of \eqref{eq:w_max}.

To prove (\ref{eq:1_max}), first note that it follows from \cite[Lemma A.1]{RGSc}, that the number of infected vertices at time $T_{a_n}^{\sss (1)}$ is \whp larger than $m_n$ for any $m_n$ with $m_n/a_n\to 0$, and that, by Proposition \ref{prop:prel}(a), there exist $\rho>0$ such that the degrees $(B_i)_{i=2}^{n^\rho}$ of the $n^\rho$ first vertices that were infected are \whp equal to an i.i.d.\ collection $(\widetilde{B}_i)_{i=2}^{n^\rho}$ with distribution (\ref{eq:forward_degree}). A calculation analogous to (\ref{eq:max_calc}) yields that $\max\{B_2,\ldots,B_{n^\rho}\}\geq n^{\rho(1-\delta)/(\tau-2)}$ \whp for any $\delta\in(0,1)$. The vertex with maximal degree at time $T_{a_n}^{\sss(1)}$ can now be connected to $v_{\rm max}$ by a path constructed in the same way as in the proof of (\ref{eq:w_max}). Note that in this case we have $\eta_1=n^{\rho(1-\delta)/(\tau-2)}$, which gives $\eta_i=n^{\rho\zeta^i}/(\log n)^{\zeta^{i-1}}$ with $\zeta=(1-\delta)/(\tau-2)$. This means that the bound on the passage time for the path is of order $1/\log n$, which is even smaller than the required $1/\log\log n$.
\end{proof}

\section{Proof of Theorem \ref{th:main}(b)}
\label{sec:main_b}
In this section, we prove Theorem \ref{th:main}(b). Throughout this section, we deal with the competition process, and explore the competition from the two vertices 1 and 2 simultaneously. Let $\Totm$ denote the time when the SWG from these two vertices consists of $m$ vertices (recall the definition (\ref{eq:Rm}) of $R_m$). Furthermore, write $\mathcal{W}_n$ for the type that occupies the largest number of vertices at time $\Totan$ and $\mathcal{L}_n$ for the type that occupies the smallest number of vertices. We will show that $\mathcal{W}_n$ wins with probability 1 as $n\to\infty$ and that $\mathcal{L}_n$ is hence asymptotically the losing type. Our first result is that $\Totan$ converges to the minimum of the explosion times $V_1$ and $\mu V_2$ of the one-type exploration processes, and that the asymptotic number $\Nlos^*$ of vertices that are then occupied by type $\mathcal{L}_n$ is finite. In the rest of the section, we then prove that the asymptotic number $\Nlos^{**}$ of vertices occupied by type $\mathcal{L}_n$ after time $\Totan$ is also almost surely finite.

We start by introducing some notation. Let $\mathcal{W}$ and $\mathcal{L}$ denote the winning and the losing type, respectively, in the limit as $n\to\infty$. Also let $\muwin=\mu$ when the winning type is type 2, and $\muwin=1$ otherwise, and similarly $\mulos=\mu$ when the losing type is type 2, and $\mulos=1$ otherwise. According to Theorem \ref{th:main}(a), asymptotically type 1 wins with probability $\PP(V_1<\mu V_2)$ and type 2 with probability $\PP(V_1>\mu V_2)$. Hence $\muwin$ is equal to 1 with probability $\PP(V_1<\mu V_2)$ and equal to $\mu$ with probability $\PP(V_1>\mu V_2)$. Finally, let $(E_j^{\sss(1)})_{j\geq 1},(E_j^{\sss(2)})_{j\geq 1}$ denote two sequences of i.i.d.\ exponential random variables with mean 1, and $(\widetilde{S}_j^{\sss(1)})_{j\geq 1},(\widetilde{S}_j^{\sss(2)})_{j\geq 1}$ two i.i.d.\ sequences of the random walk describing the asymptotic number of unexplored half-edges attached to the SWG in a one-type exploration process, cf.\  Section 2. Then,
	$$
	V_i=\sum_{j=1}^{\infty} E_j^{\sss(i)}/S_j^{\sss(i)}
	$$
denote the explosion times of the corresponding continuous time branching process (CTBP). Let
	$$
	\Vwin=\muwin\sum_{j=1}^{\infty} \Ewin_j/\Swin_j,
	\qquad
	\Vlos=\mulos\sum_{j=1}^{\infty} \Elos_j/\Slos_j.
	$$	
Then, $V_1\wedge (\mu V_2)=\Vwin$ is close to the time when the winning type finds vertices of very high degree. The random variable $\Vlos$ does not have such a simple interpretation in terms of the competition process, since the winning type starts interfering with the exploration of the losing type before time $\Vlos$. The main aim of this section is to describe the exploration of the winning and losing types after time $\Vwin$, where the CTBP approximation breaks down and the species start interfering. The relation between the number of vertices found by the losing kind and $\Vwin$ is described in the following lemma:
	
\begin{lemma}[Status at completion of the CTBP phase]
\label{le:tt_explosion}
Let $\NLna=\max\{m\colon T^{\sss (\sss\mathcal{L}_n)}_{\sss R_m} \leq \Totan\}$. Then, as $n\rightarrow \infty$,
	$$
	(\Totan, \NLna)\convd (\Vwin, \Nlos^*),
	$$
where
	\begin{equation}
	\label{Nlos-def}
	\Nlos^*\stackrel{d}{=}\max\big\{m\colon \mulos \sum_{j=1}^{m} \Elos_j/\Slos_j \leq \Vwin\big\}.
	\end{equation}
\end{lemma}

\proof
By definition, the number of vertices occupied by type $\mathcal{W}_n$ at time $\Totan$ is in the range $(a_n/2,a_n]$. Furthermore, by Proposition \ref{prop:prel}(c), the set of type 1 and type 2 infected vertices, respectively, are whp disjoint at this time, that is, none of the infection types has then tried to occupy a vertex that was already taken by the other type. Up to that time, the exploration processes started from vertex 1 and 2, respectively, hence behave like in the corresponding one-type processes. The asymptotic distributions of $\Totan$ and $\NLna$ follow from the characterization (\ref{eq:Tm_distr}) of the time $T_m$ in a one-type process and the convergence result in Proposition \ref{prop:prel}(c).
\qed\medskip

The next result describes how vertices are being found by type $\mathcal{W}_n$ after time $\Totan$.
We will see that at time $\Totan+t$, a positive proportion of the vertices will be found by the winning type. To describe how the winning type sweeps through the graph, we need some notation. Write $\Ntk$ for the fraction of vertices that have degree $k$ and that have been captured by type $\mathcal{W}_n$ at time $\Totan+t$, that is,
	$$
	\Ntk=\#\{v\colon D_v=k \mbox{ and $v$ is infected by type $\mathcal{W}_n$ at time }\Totan+t\}/n.
	$$
Further, for an edge $e=xy$ consisting of two half-edges $x$ and $y$ that are incident to vertices $U_x$ and $U_y$, we say that $e$ {\em spreads the winning infection at time $s$} when $U_x$ (or $U_y$) is type $\mathcal{W}_n$ infected at time $s$, and $U_y$ (or $U_x$) is then $\mathcal{W}_n$ infected at time $s$ through the edge $e$. Then we let $\Ltwin$ denote the number of edges that have spread the type $\mathcal{W}_n$ infection by time $s$, i.e.,
	$$
	\Ltwin=\#\{e\colon \mbox{ $e$ has spread the winning infection at time }\Totan+t\},
	$$
and $\barLtwin=\Ltwin/[L_n/2]$ is the proportion of edges that have spread the winning infection.

The essence of our results is that $\Ntk$ and $\barLtwin$ develop in the same way as in a one-type process with type $\mathcal{W}_n$ {\em without competition}. Indeed, $\Totan$ can be interpreted as the time when the super-vertices have been found by type $\mathcal{W}_n$ and, after this time, type $\mathcal{W}_n$ will start finding vertices very quickly, which will make it hard for type $\mathcal{L}_n$ to spread. Recall that $\muwin$ denotes the mean passage time per edge for the winning type in the limit as $n\to\infty$. Also define
	$$
	V(k)=\sum_{j=0}^{\infty} E_j/S_j(k),
	\qquad
	\text{where}
	\qquad
	S_j(k)=k+\sum_{i=1}^j (\widetilde{B}_i-1),
	$$
and $(\widetilde{B}_i)_{i\geq 1}$ is an i.i.d.\ sequence with law \eqref{eq:forward_degree}. Recall that $D^\star$ denotes a size-biased version of a degree variable.
\begin{prop}[Fraction of fixed degree winning type vertices and edges at fixed time]
\label{prop-degree-time}
As $n\rightarrow \infty$,
	\eqn{
	\label{Ntk-conv}
	\Ntk\convp \PP(\muwin V(k)\leq t)\PP(D=k),
	}
and
	\eqn{
	\label{Ltwin-conv}
	\barLtwin \convp \PP\Big(\muwin\big(E+\widetilde V_a\wedge\widetilde V_b\big)\leq t\Big),
	}
where $(\widetilde V_a,\widetilde V_b)$ are two independent copies of $V(D^\star-1)$ and $E$ is an exponential random variable with mean 1.
\end{prop}
\medskip

The proof of Proposition \ref{prop-degree-time} is deferred to the end of this section. We first complete the proof of Theorem \ref{th:main}(b) subject to it. To this end, we grow the SWG of type $\mathcal{L}_n$ from size $\NLna$ onwards. At this moment, \whp the type $\mathcal{L}_n$ has not yet tried to occupy a vertex that was already taken by type $\mathcal{W}_n$. However, when we grow the SWG further, then type $\mathcal{W}_n$ will grow very quickly due to its explosion. We will show that the growth of type $\mathcal{L}_n$ is thus delayed to the extent that it will only conquer finitely many vertices. An important tool in proving this rigorously is a stochastic process $(S_m')_{m\geq 0}$ keeping track of the number of unexplored half-edges incident to the SWG of the losing type.

Recall that, by the construction of the two-type exploration process described in Section \ref{sec:prel}, the quantity $S_{\sss R_j}^{\sss(\mathcal{W}_n, \underline{\mathcal{L}_n})}$ represents the number of half-edges incident to the SWG of type $\mathcal{L}_n$ when the SWG contains precisely $j$ vertices. Write $R_{\NLna}=R^\ast_n$ and define $S_0'(n)=S_{R^\ast_n}^{\sss(\mathcal{W}_n,\underline{\mathcal{L}_n})}$ and $T_{0}'(n)=0$. We then grow the SWG of type $\mathcal{L}_n$ one edge at a time by pairing the half-edge with minimal remaining edge weight to a uniform half-edge that has not yet been paired. Denote the half-edge of minimal weight in the $m$th step by $x_m$ and the half-edge to which it is paired by $P_{x_m}$, and recall that $U_y$ denotes the vertex incident to the half-edge $y$. Of course, it is possible that  $U_{P_{x_m}}$ is already infected, and then the SWG of the losing type does not grow.

The sequences $(T_m'(n))_{m\geq 0}$ and $(S_m'(n))_{m\geq 0}$ are constructed recursively in that $T_m'(n)-T_{m-1}'(n)=\mulos E_m'/S_{m-1}'(n)$ for an i.i.d.\ sequence $(E_m')_{m\geq 0}$ of exponential variables with parameter 1 that is independent of all previous randomness, and
	\begin{equation}
	\label{eq:Sm_rec}
	S_m'(n)-S_{m-1}'(n)=B_m'(n)-1,
	\end{equation}
where $B_m'(n)$ denotes $D_{U_{P_{x_m}}}-1$ when $U_{P_{x_m}}$ is not already infected, while $B_m'(n)=0$ otherwise. Our aim is to identify the scaling limit of $(T_m'(n),S_m'(n))_{m\geq 0}$.

To this end, we define $S_0'=S_{\sss\Nlos^*}^{\sss(\mathcal{W},\underline{\mathcal{L}})}$ and $T_0'=0$, where $\Nlos^*$ is given by \eqref{Nlos-def}. Further, for $m\geq 1$, again define $(T_m',S_m')_{m\geq 0}$ recursively by $T_m'-T_{m-1}'=E_m'/S_{m-1}'$ for an i.i.d.\ sequence $(E_m')_{m\geq 0}$ of exponential variables with parameter 1 independent of all previous randomness, and $(S_m')_{m\geq 0}$ is defined recursively by $S_m'-S_{m-1}'=B_m'-1$, where, conditionally on $T_{m-1}'$ and for all $k\geq 1$
	\eqn{
	\label{Bm'-def}
	\prob(B_m'=k\mid T_m'=t)
	=\prob(D^\star=k+1)\frac{\prob(\muwin V(k+1)>t)}
	{\prob\Big(\muwin\big(E+\widetilde V_a\wedge\widetilde V_b\big)> t\Big)},
	}
while $\prob(B_m'=0\mid T_m'=t)=1-\sum_{k\geq 1} \prob(B_m'=k\mid T_m'=t)$.

\begin{remark}[Edge-weight distribution vs.\ weights on half-edges]
\label{rem-edge-weight} In the above construction, we explore from vertex 1 and 2 simultaneously, and search for the minimal weight among unexplored half-edges of the loosing type. This half-edge is then paired to a randomly chosen second half-edge, and the passage time of the resulting edge should then be given by the minimal weight, that is, the second half-edge should not be assigned any weight at all. The careful reader may note that this is not the case in the above construction when the second half-edge belongs to a vertex that is already infected. In that case, however, the edge will never be used to transmit infection (indeed, such edges are not included in the SWG defined in Section 2) and its assigned passage time is hence unimportant for the competition process. We remark that, when first exploring one type up to a fixed time or size, the above problem does not apply, see \cite{RGSc} for details.
\end{remark}
\medskip
	
The following lemma shows that $(T_m',S_m')_{m\geq 0}$ is indeed the limit in distribution of the process $(T_m'(n),S_m'(n))_{m\geq 0}$. In its statement, we use $\field_{\Totan}$ for the $\sigma$-field of the exploration of the two competing species up to time $\Totan$:

\begin{lemma}[Exploration of the losing type beyond explosion of the winning type]
\label{lem-expl-losing}
Conditionally on $\field_{\Totan}$, and for all $m\geq 1$, as $n\rightarrow \infty$,
	$$
	(T_l'(n),S_l'(n))_{l=0}^m\convd (T_l',S_l')_{l=0}^m.
	$$
\end{lemma}

\proof We prove the claim by induction on $m$. The statement for $m=0$ follows from Lemma \ref{le:tt_explosion}, since $T_0'(n)=T_0'=0$, and $S_0'(n)=S_{R^\ast_n}^{\sss(\mathcal{W}_n,\underline{\mathcal{L}_n})}$ with $R^\ast_n=\NLna$. The latter converges in distribution by Lemma \ref{le:tt_explosion}.

To advance the induction claim, we introduce some further notation.
Let $\field_t'$ be the $\sigma$-field generated by $\field_{\Totan}$ together with $(T_l'(n),S_l'(n))_{l=0}^m$ for all $l$ such that $T_l'(n)\leq t$. Then, conditionally on $\field_t'$,
	$$
	T_{m}'(n)-T_{m-1}'(n)\sim \mulos E_m'/S_{m-1}'(n),
	$$
where $E_m'$ is an exponential random variable independent of all other randomness. Since, by the induction hypothesis, $S_{m-1}'(n)\convd S_{m-1}'$, conditionally on $\field'_{T_{m-1}'(n)}$ we also have that $T_{m}'(n)-T_{m-1}'(n)\convd \mulos E_m'/S_{m-1}'$. This advances the claim for $T_{m}'(n)$. For $S_{m}'(n)$, we note that
$B_m'(n)=k$ precisely when the half-edge that is found is paired to a half-edge of a vertex of degree $k+1$ that is not yet infected. The number of vertices that is type $\mathcal{L}_n$ infected is bounded by $R^\ast_n+m-1$, so this is negligible. Therefore, writing $N_n^{\sss(k+1)}$ for the total number of vertices of degree $k+1$,
	$$
	\prob(B_m'(n)=k\mid \field'_{T_{m-1}'(n)}, T_m'(n)=t)
	=\frac{(k+1)[N_n^{\sss(k+1)}-\Ntkwin]}{L_n-\Ltwin}(1+\op(1)).
	$$
We rewrite this as
	$$
	\prob(B_m'(n)=k\mid \field'_{T_{m-1}'(n)}, T_m'(n)=t)=\frac{(k+1)}{(L_n/n)}\frac{\bar{N}_n^{\sss(k+1)}-\barNtkwin}{1-\barLtwin}(1+\op(1)),
	$$
where $\bar{N}_n^{\sss(k+1)}=N_n^{\sss(k+1)}/n$ denotes the proportion of vertices with degree $k+1$.
By Proposition \ref{prop-degree-time}, this is equal to
	\eqan{
	&\prob(B_m'(n)=k\mid \field'_{T_{m-1}'(n)}, T_m'(n)=t)\nn\\
	&\qquad=\frac{(k+1)}{\expec[D]}\frac{\prob(D=k+1)-\PP(\muwin V(k+1)\leq t)\PP(D=k+1)}{1-\PP\Big(\muwin\big(E+\widetilde V_a\wedge\widetilde V_b\big)\leq t\Big)}(1+\op(1))\nn\\
	&\qquad\convp \prob(D^\star=k+1)\frac{\prob(\muwin V(k+1)>t)}
	{\prob\Big(\muwin\big(E+\widetilde V_a\wedge\widetilde V_b\big)> t\Big)}=\prob(B_m'=k\mid T_m'=t),\nn
	}
as required. This shows that, conditionally on $\field'_{T_{m-1}'(n)}$, the law of $(T_{m}'(n)-T_{m-1}'(n), B_m'(n))$ converges to \eqref{Bm'-def}. Hence, this advances the induction and thus proves the claim.\qed\medskip

Denote $H'(n)=\max\{m\colon S_m'(n)\geq 1\}$ and $H'=\max\{m\colon S_m'\geq 1\}$. In the following corollary, we show that $H'(n)\convd H'$.

\begin{corr}[Convergence of hitting of zero]
\label{lem-hitting-time-zero}
For all $m\geq 1$, as $n\rightarrow \infty$,
	$$
	\prob(H'(n)\leq m\mid \field_{\Totan})\rightarrow \prob(H'\leq m\mid \field_{\Vwin}).
	$$
Therefore, $H'(n)\convd H'$, where $H'$ is possibly defected.
\end{corr}

\proof It suffices to realize that the event $\{H'(n)\leq m\}$ is measurable with respect to $(T_l'(n),S_l'(n))_{l=0}^m$. Then the claim follows from Lemma \ref{lem-expl-losing}.
\qed
\bigskip

Note that $B_m'$ in (\ref{Bm'-def}) has infinite mean when we condition on $T_m'=t=0$, which implies that initially many of its values are large. This is the problem that we need to overcome in showing that the number of vertices found by the losing type is finite. As it turns out, conditionally on $T_m'=t$, the mean of $B_m'$ decreases as $t$ increases, and becomes smaller than 1 for large $t$, so this saves our day. In order to prove this, we first need some results on the process $V(k)$; see part (a) of the below lemma. In part (b), we also include an asymptotic characterization that will imply \eqref{asympt-prob-win} in Theorem \ref{th:main-multiple}(c).

\begin{lemma}[Bounds and asymptotics for $V(k)$]\label{lem-V(k)-asymp}\leavevmode
\begin{itemize}
\item[\rm{(a)}] The law of $V(k)$ is related to that of $V(1)$ by
	\eqn{
	\label{V(k)-rec}
	\prob(V(k)>t)=\prob(V(1)>t)^k,\qquad k\geq 1, t\geq 0.
	}
Further, for all $t\geq 0$,
\eqn{
\label{EVaVb_bd}
\prob\left(E+\widetilde V_a\wedge\widetilde V_b> t\right)\geq \prob(V(1)>t)^2.
}
\item[\rm{(b)}] Assume that (A2') holds. As $k\rightarrow \infty$,
	\eqn{
	\label{Y-def}
	k^{3-\tau}V(k) \convd Y\equiv \int_0^{\infty} 1/(1+Q_t)dt,
	}
where $(Q_t)_{t\geq 0}$ is a $(\tau-2)$-stable motion. Further, $\expec[Y]<\infty$.
\end{itemize}
\end{lemma}
	
\proof Starting with (a), the relation \eqref{V(k)-rec} follows since $V(k)$ is the explosion time starting from $k$ individuals, which is the minimum of the explosion times of $k$ i.i.d.\ explosion times starting from 1 individual, i.e.,
	$$
	V(k)\stackrel{d}{=}\min_{i=1}^k V_i,
	$$
where $(V_i)_{i\geq 1}$ are i.i.d.\ with law $V(1)$ and $E$ is exponential with parameter 1. From this, \eqref{V(k)-rec} follows immediately. For \eqref{EVaVb_bd}, write $G(t)=\prob(V(1)>t)$ and note that $V(1)\stackrel{d}{=}E+\min_{i=1}^{D^{\star}-1} V_i$, where again $(V_i)_{i\geq 1}$ are i.i.d.\ with law $V(1)$. Thus, conditioning on $E$ and $D^{\star}$, and using \eqref{V(k)-rec}, leads to
	\eqn{
	\label{V(1)-rec}
	G(t)=\e^{-t}+\int_0^t \e^{-s} \expec[G(t-s)^{D^{\star}-1}]ds.
	}
Furthermore, since $\widetilde{V}_a$ and $\widetilde{V}_b$ are i.i.d.\ with the same distribution as $V(D^{\star}-1)\stackrel{d}{=}\min_{i=1}^{D^{\star}-1} V_i$, we similarly obtain that
\eqn{
\label{edge_rec}
\prob\left(E+\widetilde V_a\wedge\widetilde V_b> t\right)=\e^{-t} + \int_0^t \e^{-s}\expec[G(t-s)^{D^{\star}-1}]^2ds.
}
By the Cauchy-Schwarz inequality (where we split $\e^{-s}=\e^{-s/2}\e^{-s/2}$ on the left hand side), we have that
\eqn{
\label{Cauchy}
\left(\int_0^t \e^{-s}\expec[G(t-s)^{D^{\star}-1}]ds\right)^2\leq (1-\e^{-t})\int_0^t \e^{-s} \expec[G(t-s)^{D^{\star}-1}]^2ds.
}
Combining \eqref{V(1)-rec}, \eqref{edge_rec} and \eqref{Cauchy} yields that
$$
\prob\left(E+\widetilde V_a\wedge\widetilde V_b> t\right)\geq \e^{-t}+\frac{(G(t)-\e^{-t})^2}{1-\e^{-t}}\,=\,G(t)^2+\frac{\e^{-t}(G(t)-1)^2}{1-\e^{-t}}\geq \, G(t)^2,
$$
as desired.

Moving on to (b), recall that $V(k)=\sum_{j=1}^{\infty} E_j/S_j(k)$, where $S_j(k)=k+\sum_{i=1}^j (\widetilde B_i-1)$. Since $\widetilde B_i$ is in the domain of attraction of a stable law with exponent $\tau-2$, we have that $(S_{t k^{\tau-2}}(k)/k)_{t\geq 0} \convd (1+Q_t)_{t\geq 0}$, where $(Q_t)_{t\geq 0}$ is a stable subordinator with exponent $\tau-2$. Thus,
	$$
	k^{3-\tau} V(k)\convd \int_0^{\infty} 1/(1+Q_t)dt=:Y.
	$$
As for the expectation of the integral random variable $Y$, we use Fubini to write
	\eqan{
	\expec\Big[\int_0^{\infty} 1/(1+Q_t)dt\Big]
	&=\int_0^{\infty} \expec\big[1/(1+Q_t)\big]dt=\int_0^{\infty} \int_0^{\infty}\expec\big[\e^{-s(1+Q_t)}\big]dsdt\nn\\
	&=\int_0^{\infty} \int_0^{\infty}\e^{-s}\e^{-\sigma t s^{\tau-2}}dsdt
	=\frac{1}{\sigma}\int_0^{\infty}\e^{-s}s^{-(\tau-2)}ds<\infty\nn,
	}
where we have used that $\expec[\e^{-s Q_t}]=\e^{-\sigma t s^{\tau-2}}$ for some $\sigma>0$ and that $\tau-2\in (0,1)$.
\qed \medskip

Lemma \ref{lem-V(k)-asymp} allows us to prove \eqref{asympt-prob-win} in Theorem \ref{th:main-multiple}(c):

{\it Proof of \eqref{asympt-prob-win} in Theorem \ref{th:main-multiple}(c).} We note that
$V_{i,k}\stackrel{d}{=} V_i(A_{i,k})$, where $A_{i,k}=\sum_{j=1}^k D_{i,j}$ and $(D_{i,j})_{i,j\geq 1}$ are i.i.d.\ random variables with the same distribution as $D$. When $k\rightarrow \infty$, we have that $A_{i,k}/k\convp \expec[D]$. As a result, $(\expec[D]k)^{3-\tau}V_{1,k}\convd Y_1$, while $(\expec[D]k)^{3-\tau}V_{2,\alpha k}\convd \alpha^{\tau-3}Y_2$, where $Y_1,Y_2$ are i.i.d.\ copies of $Y$ in \eqref{Y-def}. Hence,
	\eqan{
	\prob(V_{i,k}<\mu V_{2,\alpha k})
	&=\prob\Big((\expec[D]k)^{3-\tau}V_{1,k}<\mu (\expec[D]k)^{3-\tau}V_{2,\alpha k}\Big)
	\rightarrow \prob\big(Y_1<\mu \alpha^{\tau-3}Y_2\big)\nn.
	}
\qed
\medskip

Let $B_m'(t)\stackrel{d}{=}B'_m|\,T'_m=t$. The next lemma shows that $(B'_m(t))_{m\geq 1}$ is stochastically dominated by an i.i.d.\ sequence whose mean is strictly smaller than 1 for large $t$. It also shows that $T'_m\to\infty$ almost surely. It is the key ingredient in the proof of Theorem \ref{th:main}:

\begin{lemma}[Asymptotic behavior of $B_m'(t)$ and $T'_m$]\label{le:B'm_dom}\leavevmode
\begin{itemize}
\item[\rm{(a)}] For each fixed $t>0$, the sequence $(B'_m(t))_{m\geq 1}$ is stochastically dominated by an i.i.d.\ sequence $(\bar{B}_m(t))_{m\geq 1}$. Furthermore, $\expec[\bar{B}_m(t)]$ is finite for each fixed $t>0$ and $\expec[\bar{B}_m(t)]\to \prob(D^{\star}=2)$ as $t\to\infty$.
\item[\rm{(b)}] Almost surely $T'_m\to\infty$ as $m\to\infty$.
\end{itemize}
\end{lemma}

\proof Recalling the definition of $B'_m|\,T'_m=t$ and using Lemma \ref{lem-V(k)-asymp}(a), we obtain for $k\geq 1$ that
	\eqn{
	\prob(B'_m(t)=k) 
	\leq \prob(D^\star=k+1)\prob(\muwin V(1)>t)^{k-1}.
	}
	
Denote $\bar{p}_k(t)=\prob(D^\star=k+1)\prob(\muwin V(1)>t)^{k-1}$, and let $\bar{B}_m(t)$ be defined by
$$
\prob(\bar{B}_m(t)=k)=\left\{
\begin{array}{ll}
\bar{p}_k(t) & \mbox{for }k\geq 1;\\
1-\sum_{k\geq 1} \bar{p}_k(t) & \mbox{for }k=0.
  \end{array}
            \right.
$$
For any fixed $t>0$, we have that $\bar{p}_k(t)\to 0$ exponentially in $k$. It follows that $\bar{B}_m(t)$ has all moments so that, in particular, its mean is finite. Furthermore, $\bar{p}_1(t)=\prob(D^\star=2)$ for any $t>0$, while $\bar{p}_k(t)\to 0$ as $t\to\infty$ for each $k\geq 2$. Hence $\expec[\bar{B}_m(t)]\rightarrow \prob(D^\star=2)$.

As for (b), recall the construction of the processes $(S'_m)_{m\geq 0}$ and $(T'_m)_{m\geq 0}$, with $S'_m=S'_0+\sum_{i=1}^mB'_i$ and $T'_m=\sum_{i=1}^mE'_i/S'_{i-1}$ for $m\geq 1$, where $B_m'=B_m(T_{m}')$. Note that $B'_i(t)$ is stochastically bounded by an i.i.d.\ sequence that is decreasing in $t$ having finite mean for all $t>0$, and that $t=T_m'\geq T_1'>0$ a.s. This implies that, conditionally on $T_1'=t_1'>0$, $S'_m$ grows at most linearly in $m$ and, as a consequence, $T'_m\to\infty$ a.s.
\qed\medskip

With this result at hand we are finally ready to prove Theorem \ref{th:main}(b):

\noindent \emph{Proof of Theorem \ref{th:main}(b).}
Recall the construction of the process $(S_m')_{m\geq 0}$ in the recursion (\ref{eq:Sm_rec}), and recall that $\NLna$ denotes the number of vertices infected by the losing type at time $\Totan$. Denote the total number of vertices infected by type $\mathcal{L}_n$ {\em after} time $\Totan$ by $\NLnaa$. We can identify this as
	$$
	\NLnaa=\#\{m\colon B_m'(n)\geq 1\}.
	$$
Indeed, each time when a new vertex is found that is not infected by type $\mathcal{W}_n$, by assumption (A1), the degree of the vertex is at least 2, so that $B_m'(n)\geq 1$. Thus, the number of vertices found is equal to the number of $m$ for which $B_m'(n)\geq 1$.

Recall that the total asymptotic number of losing type vertices is denoted by $\Nlos$. This number can now be expressed as
    	\eqn{
	\label{Nlos-tot-def}
	\Nlos=\Nlos^*+\Nlaa,
	}
where $\Nlos^*$ is defined in Lemma \ref{le:tt_explosion} and $\NLnaa\convd \Nlaa :=\#\{m\colon B_m'\geq 1\}$, where the weak convergence follows from Lemma \ref{lem-expl-losing}. Further, since the convergence in Lemma \ref{lem-expl-losing} is {\em conditional on} $\field_{\Totan},$ we also obtain the joint convergence
$$
(\NLna,\NLnaa)\convd (\Nlos^*,\Nlaa),
$$
which implies \eqref{Nlos-def}. To prove Theorem \ref{th:main}, it hence suffices to show that the random variable $\Nlaa$ is finite almost surely. This certainly follows when $H'=\max\{m\colon S_m'\geq 1\}$ is almost surely finite, which is what we shall prove below.

We argue by contradiction. Assume that $H'=\infty$. Then, $T_{m-1}'<\infty$ for every $m$.
Furthermore, $S'_m-S'_{m-1}=B'_m-1$ where, by Lemma \ref{le:B'm_dom}(a), the contribution $B'_m$ is stochastically dominated by $\bar{B}_m(T'_m)$, with $\expec[\bar{B}_m(t)]\to \prob(D^\star=2)$ as $t\to\infty$. Pick $k$ large so that $\expec[\bar{B}_m(T'_k)\mid T'_k]<1$, which is possible since $\prob(D^\star=2)<1$ and $T'_k\to\infty$ a.s.\ by Lemma \ref{le:B'm_dom}(b). Then, conditionally on $T'_k$ and for $m>k$, we have that $S'_m-S'_k$ is stochastically dominated by $\sum_{i=k+1}^m(\bar{B}_i(T'_k)-1)$ -- a sum of i.i.d.\ variables with negative mean. It follows that $S'_m$ hits 0 in finite time, so that $H'<\infty$, which is a contradiction.
\qed\medskip

We finish by proving Proposition \ref{prop-degree-time}:

\noindent {\it Proof of Proposition \ref{prop-degree-time}.}
We start with the proof of \eqref{Ntk-conv}. Let $U$ be a randomly chosen vertex and write $\1_{\sss U}^{\sss(t,k)}$ for the indicator taking the value 1 when vertex $U$ has degree $k$ and is occupied by type $\mathcal{W}_n$ at time $\Totan+t$. Note that, with $G_n$ denoting the realization of the configuration model including its edge weights,
	\eqn{
	\label{Ntk-repr}
	\Ntk=\expec[\1_{\sss U}^{\sss(t,k)}\mid G_n].
	}
We will show that $\E[\1_{\sss U}^{\sss(t,k)}\mid G_n]\convp \PP(\muwin V(k)\leq t)\PP(D=k)$ by aid of a conditional second moment method. Write $\SWG^{\sss(1,2)}(s)$ for the SWG at real time $s$ with exploration under competition. We perform the analysis conditionally on $\SWG^{\sss(1,2)}\left(T^{\sss (\sss\mathcal{W}_n)}_{\sss R_{a_n}}\right):=\Psi_n$, that is, the exploration graph under competition observed at the time when type $\mathcal{W}_n$ reaches size $a_n$ with one-type exploration, see below for further details on the structure of this graph. To apply the conditional second moment method, first note that
	\eqan{
	\expec[\Ntk \mid \Psi_n]&=\PP(\1_{\sss U}^{\sss(t,k)}=1 \mid \Psi_n)\nn\\
	&=\PP(\mbox{$U$ is type $\mathcal{W}_n$ infected at time $\Totan+t$}
	\mid \Psi_n,
	D_{\sss U}=k)\PP(D_{\sss U}=k),\nn
	}
and
	\eqan{
	\expec[(\Ntk)^2 \mid \Psi_n]
	&=\PP(\1_{\sss U_1}^{\sss(t,k)}=\1_{\sss U_2}^{\sss(t,k)}=1 \mid \Psi_n)\nn\\
	&=\PP(\mbox{$U_1,U_2$ are type $\mathcal{W}_n$ infected at time $\Totan+t$}\mid
	 \Psi_n, D_{\sss U_1}=D_{\sss U_2}=k)\nn\\
	&\qquad\times \PP(D_{\sss U_1}=k)\PP(D_{\sss U_2}=k),\nn
	}
where we use that the event $\{D_{\sss U_i}=k\}$ is independent of $\Psi_n$. Therefore, it suffices to show that the first factors in the above two right hand sides converge to $\PP(\muwin V(k)\leq t)$ and $\PP(\muwin V(k)\leq t)^2$, respectively. Indeed, in this case,
	$$
	\expec[\Ntk\mid \Psi_n]\convp \PP(\muwin V(k)\leq t) \prob(D=k),
	$$
while ${\mathrm{Var}}(\Ntk\mid \Psi_n)=\op(1)$, so that $\Ntk\convp \PP(\muwin V(k)\leq t) \prob(D=k)$, as required.

Assume that $\mathcal{W}_n=1$, so that type 1 wins whp. We can then construct $\Psi_n$ by first growing the one-type SWG from vertex 1 to size $a_n$. The time when this occurs is $T_{\sss a_n}^{\sss(1)}$, which converges in distribution to $V_1$. Then, we grow the one-type SWG from vertex 2 up to size $m_n=\sup\{m: \mu T^{\sss(2)}_{m}\leq T_{\sss a_n}^{\sss(1)}\}$. When $\mathcal{W}_n=1$, by Lemma \ref{le:tt_explosion}, the number $m_n$ of type 2 infected vertices at time $\Toan$ converges to an almost surely finite random variable. Furthermore, by Proposition \ref{prop:prel}(c), whp, $\SWG^{\sss(1)}_{a_n}$ and $\SWG^{\sss(2)}_{m_n}$ are disjoint. Hence, whp, $\Psi_n=\SWG^{\sss(1)}_{a_n}\cup\SWG^{\sss(2)}_{m_n}$.

Now recall that $X(1\leftrightarrow U)$ denotes the passage time between vertices 1 and $U$ in a one-type process with only type 1 infection. It follows from the analysis in \cite{RGSc}, summarized in Proposition \ref{prop:prel}, that $X(1\leftrightarrow U)$ converges in distribution to $V_1+V(k)$: As described above, we first grow $\SWG^{\sss(1)}_{a_n}$ and $\SWG^{\sss(2)}_{m_n}$. Then we grow the SWG from $U$ until it hits $\SWG^{\sss(1)}_{a_n}\cup \SWG^{\sss(2)}_{m_n}$. This occurs when the SWG from vertex $U$ has size $C_n$ such that $C_n/a_n$ converges in distribution to a proper random variable. Further, the time it takes to reach this size converges in distribution to $V(k)$ -- indeed, $V(k)$ describes the asymptotic explosion time for an exploration process started at a vertex with degree $k$. Hence,
	\begin{equation}\label{eq:onetypeconv}
	\PP(X(1\leftrightarrow U)\leq T_{\sss a_n}^{\sss(1)}+t\mid \Psi_n, D_{\sss U}=k)
	\convp \PP(V(k)\leq t).
	\end{equation}
In a similar way, we conclude that $\PP(X(1\leftrightarrow U_1),X(1\leftrightarrow U_2)\leq T_{\sss a_n}^{\sss(1)}+t\mid \Psi_n, D_{\sss U_1}=D_{\sss U_2}=k)\convp \PP(V(k)\leq t)^2.$
We need to show that the presence of type 2 infection started from vertex 2 does not affect this convergence result when $\mathcal{W}_n=1$.

Recall that $\SWG^{\sss (u)}(s)$ denotes the one-type SWG from vertex $u$ at time $s$. Also, let $\vep_n\searrow 0$ be as in Lemmas \ref{le:3}-\ref{le:4}. As pointed out above, the number $m_n$ of type 2 infected vertices at time $\Toan$ converges to an almost surely finite random variable. Furthermore, the probability that any additional vertices become type 2 infected in the time interval $(\Toan,\Toan+\vep_n)$ converges to 0, since $\vep_n\searrow 0$. Hence, \whp $\SWG^{\sss (1)}(\Toan)\cap \mu\SWG^{\sss (2)}(\Toan+\vep_n)=\varnothing$, where the multiplication by $\mu$ indicates that the edge passage times are multiplied by $\mu$ when constructing the SWG from vertex 2. Finally, by Lemma \ref{le:3}, the type 1 infection has \whp occupied all vertices with degree larger than $(\log n)^\sigma$ by time $\Toan+\vep_n$.

Now consider the SWG from vertex $U$ of degree $k$, where \whp $U\not\in \mu\SWG^{\sss(2)}(\Toan+\vep_n)$. Without the presence of the type 2 infection, this will hit $\SWG^{\sss(1)}(\Toan)$ when it has reached size $C_n$, where $C_n/a_n$ converges in distribution to a proper random variable, and the time for this converges in distribution to $V(k)$. We claim that \whp it does not hit the type 2 infection before this happens. This follows from Lemma \ref{le:2}: Indeed, the passage time from any vertex in $\mu\SWG^{\sss(2)}(\Toan+\vep_n)$ to $U$, not using the vertices in $\Goodn$ -- these are already occupied by the type 1 infection at time $\Toan+\vep_n$ and hence not available for the spread of type 2 -- is \whp larger than $b_n$, where $b_n\to \infty$. Hence, the passage time from any type 2 vertex to $U$ is \whp larger than $2V(k)+b$ for any $b>0$. This means that \whp the type 2 infection does not reach any of the vertices along the minimal weight path between $\SWG^{\sss(1)}(\Toan+\vep_n)$ and $U$ before time $V(k)+\vep$. Indeed, if it would, then there would be a path between vertex 2 and $U$ that avoids $\Goodn$ and that has passage time less than $2V(k)+\vep$.

It follows that when $\mathcal{W}_n=1$, the passage time between vertex 1 and $U$ behaves asymptotically the same as in a one-type process with only type 1 infection. Similarly, when $\mathcal{W}_n=2$, the passage time between vertex 2 and $U$ behaves asymptotically the same as in a one-type process with only type 2 infection, which yields an analog of (\ref{eq:onetypeconv}) where $V(k)$ is replaced by $\mu V(k)$. Furthermore, by the arguments in the proof of Lemma \ref{le:tt_explosion}, $\PP(\mathcal{W}_n=1)\to \PP(V_1<\mu V_2)$. Equation \eqref{Ntk-conv} in the proposition is hence proved.

The proof of \eqref{Ltwin-conv} is similar. Indeed, instead of \eqref{Ntk-repr}, we now start from $\barLtwin=\expec[\1_{e}^{\sss(t)}\mid G_n]$, where $e$ is a uniform edge in the graph and $\1_{e}^{\sss(t)}$ denotes the probability that $e$ spreads the infection before time $t$. We then again use a conditional second moment, and note that 	
	\[
	\expec[\barLtwin \mid \SWG^{\sss(1,2)}_{a_n}]
	=\prob(e\text{ has spread the $\mathcal{W}_n$ infection by time }\Totan+t \mid \SWG^{\sss(1,2)}_{a_n}).
	\]
In this expectation, a uniform edge can be obtained by drawing a half-edge uniformly at random, and pairing it to a uniform other half-edge. Let $a$ and $b$ be the vertices at the two ends of $e$, and let $\widetilde V_a$ and $\widetilde V_b$ be the explosion times of the vertices $a$ and $b$, respectively, when the type 1 infection is not allowed to use the edge $e$. Then, $e$ has spread the infection by time $\Totan+t$ precisely when either the explosion time $\widetilde V_a$ plus the edge weight $E_e$ are at most $t$ (in which case, $a$ is first type 1 infected and then spreads the infection to vertex $b$), or the explosion time $\widetilde V_b$ plus the edge weight $E_e$ are at most $t$ (in which case, $b$ is first type 1 infected and then spreads the infection to vertex $a$). We conclude that $\expec[\barLtwin \mid \SWG^{\sss(1,2)}_{a_n}]\convp \PP\big(\muwin\big(E+\widetilde V_a\wedge\widetilde V_b\big)\leq t\big)$. The extensions to the second moment computations as well as the fact that the competition does not interfere with the spread of the winning type are the same as for $\Ntk$.
\qed

\paragraph{Acknowledgement.} We thank the anonymous referee for careful reading of the manuscript, that has helped us identifying serious flaws in earlier versions and lead to significant improvements of our arguments. The work of MD was supported in part by the Swedish Research Council (VR) and The Bank of Sweden Tercentenary Foundation. The work of RvdH is supported by the Netherlands Organisation for Scientific Research (NWO) through VICI grant 639.033.806 and the Gravitation {\sc Networks} grant 024.002.003.

\end{document}